\theoremstyle{theorem}
\newtheorem{theorem}{Theorem}[section]
\newtheorem{corollary}[theorem]{Corollary}
\newtheorem{lemma}[theorem]{Lemma}
\newtheorem{proposition}[theorem]{Proposition}
\theoremstyle{definition}
\newtheorem{conjecture}[theorem]{Conjecture}
\newtheorem{question}[theorem]{Question}
\theoremstyle{remark}
\newtheorem{remark}[theorem]{Remark}
\numberwithin{equation}{section}
\tikzset{commutative diagrams/.cd}
\newcommand\C{{\mathbb C}}
\newcommand\A{{\mathbb A}}
\newcommand\Fq{{{\mathbb F}_q}}
\def\O{\mathcal O}
\DeclareMathOperator{\im}{im}
\DeclareMathOperator{\Hom}{Hom}
\DeclareMathOperator{\Aut}{Aut}
\DeclareMathOperator{\Spec}{Spec}
\DeclareMathOperator{\GL}{GL}
\DeclareMathOperator{\Mat}{Mat}
\newcommand\subeq{\subseteq}
\newcommand\supeq{\supseteq}
\newcommand\ttilde{\widetilde}
\newcommand\hhat{\widehat}
\newcommand\onto{\twoheadrightarrow}
\DeclarePairedDelimiter{\abs}{\lvert}{\rvert}
\DeclarePairedDelimiter{\set}{\{}{\}}
\DeclarePairedDelimiter{\parens}{\lparen}{\rparen}
\DeclarePairedDelimiter{\bracks}{\lbrack}{\rbrack}
\DeclarePairedDelimiter{\bbracks}{\llbracket}{\rrbracket}
\DeclarePairedDelimiter\ceil{\lceil}{\rceil}
\DeclarePairedDelimiter{\ev}{.}{\rvert}
\newcommand{\tu}{\textup}
\newcommand{\KVar}{{K_0(\mathrm{Var}_\C)}}
\renewcommand{\L}{\mathbb{L}}
\DeclareMathOperator{\Nilp}{Nilp}
\DeclareMathOperator{\Hilb}{Hilb}
\DeclareMathOperator{\cl}{cl}
\DeclareMathOperator{\Durf}{Durf}
\newcommand{\Zhat}{\hhat Z}
\newcommand{\Ohat}{\hhat \O}
\newcommand{\zetahat}{\hhat \zeta}
\newcommand{\Coh}{\mathrm{Coh}}
\newcommand{\supp}{\mathrm {supp}}
\newcommand{\partialtheta}{\Theta_{\mathrm p}}
\newcommand{\wzero}{{\textup{ w/ } [0]}}
\begin{document}
\title{Mutually annihilating matrices, and a Cohen--Lenstra series for the nodal singularity}
\author{Yifeng Huang\footnote{Dept.\ of Mathematics, University of British Columbia. Email Address: {\tt huangyf@math.ubc.ca}. }}
\date{\today}

\bibliographystyle{abbrv}

\maketitle

\begin{abstract}
We give a generating function for the number of pairs of $n\times n$ matrices $(A, B)$ over a finite field that are mutually annihilating, namely, $AB=BA=0$. This generating function can be viewed as a singular analogue of a series considered by Cohen and Lenstra. We show that this generating function has a factorization that allows it to be meromorphically extended to the entire complex plane. We also use it to count pairs of mutually annihilating nilpotent matrices. This work is essentially a study of the motivic aspects about the variety of modules over $\C[u,v]/(uv)$ as well as the moduli stack of coherent sheaves over an algebraic curve with nodal singularities. 
\end{abstract}

\section{Introduction}

\subsection{History and Motivation}
Let $R$ be a commutative ring with only finite quotient fields. We define the \emph{Cohen--Lenstra zeta function} of $R$ as
\begin{equation}
\zetahat_R(s):=\sum_M \dfrac{1}{\abs{\Aut M}} \abs{M}^{-s},
\end{equation}
where $M$ ranges over all isomorphism classes of finite(-cardinality) $R$-modules. 

When $R$ is a Dedekind domain, this function has been considered in the important work of Cohen and Lenstra \cite{cohenlenstra1984heuristics} about the statistics of finite $R$-modules, motivated by the distribution of class groups of imaginary quadratic fields. They proved a simple formula for it in \cite[p.\ 39]{cohenlenstra1984heuristics}, which was crucial in their work. 

If $R$ contains a finite field $\Fq$ with $q$ elements, we define the \emph{Cohen--Lenstra series} of $R$ over $\Fq$ as
\begin{equation}
\Zhat_{R/\Fq}(x):=\sum_M \dfrac{1}{\abs{\Aut M}} x^{\dim_\Fq M},
\end{equation}
where $M$ ranges over all isomorphism classes of finite $R$-modules. Clearly $\zetahat_R(s)=\Zhat_{R/\Fq}(q^{-s})$. When the ground field is clear from the context, we may simply denote $\Zhat_{R/\Fq}(x)$ by $\Zhat_R(x)$. 

Existing work in various areas of mathematics can be put in the context of the Cohen--Lenstra series:
\begin{itemize}
\item When $R=\Fq$, the series $\Zhat_{\Fq}(x)$ is the subject of Rogers--Ramanujan identities \cite[p.\ 104]{andrewspartitions}, which state that $\Zhat_{\Fq}(1)$ and $\Zhat_{\Fq}(q^{-1})$ each equals to an infinite product:
\begin{gather}
\Zhat_{\Fq}(1)=\frac{1}{(q^{-1};q^{-5})_\infty (q^{-4};q^{-5})_\infty};\\
\Zhat_{\Fq}(q^{-1})=\frac{1}{(q^{-2};q^{-5})_\infty (q^{-3};q^{-5})_\infty},
\end{gather}
where we use the $q$-Pochhammer symbol in \eqref{eq:q_Pochhammer_inf}. 
\item When $R$ is the power series ring $\Fq\bbracks{t}$, giving a formula of $\Zhat_R(x)$ is equivalent to finding the number of nilpotent matrices over $\Fq$, which was given by Fine and Herstein \cite{fineherstein1958}. 
\item When $R=\Fq[u,v]$, the series $\Zhat_R(x)$ is the generating function evaluated by Feit and Fine \cite{feitfine1960pairs} to enumerate pairs of commuting matrices. 
\item When $R=\Fq\bbracks{u,v}$, the series $\Zhat_R(x)$ is the generating function evaluted by Fulman and Guralnick \cite{fulmanguralnick2018} to enumerate pairs of commuting nilpotent matrices. 
\end{itemize}

For any variety\footnote{separated scheme of finite type} $X$ over $\Fq$, we define the \emph{Cohen--Lenstra series} of $X$ over $\Fq$ as
\begin{equation}
\Zhat_{X/\Fq}(x):=\sum_M \dfrac{1}{\abs{\Aut M}} x^{\dim_\Fq H^0(X;M)},
\end{equation}
where $M$ ranges over all isomorphism classes of finite-length coherent sheaves over $X$, and $H^0(X;M)$ denotes the space of global sections of $M$. This generalizes the Cohen--Lenstra series over a ring, since $\Zhat_{R}(x)=\Zhat_{\Spec R}(x)$. If $p$ is a closed point of $X$, we define the \emph{local Cohen--Lenstra series} of $X$ over $p$ as
\begin{equation}
\Zhat_{X,p}(x):=\Zhat_{\O_{X,p}}(x)=\Zhat_{\Ohat_{X,p}}(x),
\end{equation}
where $\O_{X,p}$ is the local ring of $X$ at $p$ and $\Ohat_{X,p}$ is its completion. 

It is implicit in the work of Bryan and Morrison \cite{bryanmorrison2015motivic} (and references therein) that $\Zhat_{X}(x)$ and $\Zhat_{X,p}(x)$ are known if $X$ is a smooth curve or a smooth surface (Proposition \ref{prop:zhat_known}). The Cohen--Lenstra series satisfies an important ``Euler product'' property (Proposition \ref{prop:euler}): for any variety $X$, we have
\begin{equation}
\Zhat_X(x) = \prod_{p\in \cl(X)} \Zhat_{X,p}(x),
\end{equation}
where $\cl(X)$ denotes the set of closed points of $X$. In light of this property, the study of $\Zhat_X(x)$ is equivalent to the study of the local factors $\Zhat_{X,p}(x)$. When $X$ is a reduced curve or surface, the only unknown factors are those at singular points. We can view the local Cohen--Lenstra series as an invariant attached to the classification of singularities up to analytic isomorphism, so it is natural to wonder what $\Zhat_{X,p}(x)$ reveals about the geometry of $X$ at $p$.

\subsection{Main results}
The goal of this paper is to determine the properties of $\Zhat_{X,p}(x)$ where $X$ is a singular curve over $\Fq$ and $p$ is a \emph{nodal singularity}, namely, a singularity whose completed local ring is isomorphic to $\Fq\bbracks{u,v}/(uv)$. This is the first result about the local Cohen--Lenstra series of a singularity. We use the $q$-Pochhammer symbol
\begin{equation}\label{eq:q_Pochhammer_fin}
(a;q)_n:=(1-a)(1-qa)\dots (1-q^{n-1}a),
\end{equation}
\begin{equation}\label{eq:q_Pochhammer_inf}
(a;q)_\infty:=(1-a)(1-qa)(1-q^2 a)\dots.
\end{equation}

\begin{theorem}\label{thm:A}
Fix a prime power $q>1$ and let $R_q=\Fq\bbracks{u,v}/(uv)$. Then
\begin{enumerate}
\item The power series $\Zhat_{R_q}(x)$ in $x$ has a meromorphic continuation to all of $\C$.
\item The poles of the meromorphic continuation of $\Zhat_{R_q}(x)$ are precisely double poles at $x=q^i$, $i=1,2,\dots$. Moreover, the power series $\Zhat_{R_q}(x)$ admits a factorization
\begin{equation}\label{eq:factorize}
\Zhat_{R_q}(x) = \frac{1}{(xq^{-1};q^{-1})_\infty^2} H_q(x)
\end{equation}
where
\begin{equation}\label{eq:thmB-2}
H_q(x):=\sum_{k=0}^\infty \frac{x^{2k}q^{-k^2}}{(q^{-1};q^{-1})_k} (xq^{-k-1};q^{-1})_\infty.
\end{equation}
We point out that $H_q(x)$ is an entire power series in $x$ whenever $\abs{q}>1$. 

\item $\Zhat_{R_q}(1)=\dfrac{1}{(q^{-1};q^{-1})_\infty^2}$, and $\Zhat_{R_q}(-1)=\dfrac{1}{(-q^{-2};q^{-2})_\infty}$.
\end{enumerate}
\end{theorem}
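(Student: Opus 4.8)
The plan is to connect the Cohen--Lenstra series $\Zhat_{R_q}(x)$ to a count of finite-length $R_q$-modules, and then use the explicit classification of such modules. Since $R_q = \Fq\bbracks{u,v}/(uv)$, a finite-length $R_q$-module is the same as a pair of commuting nilpotent (actually, $u$ and $v$ act nilpotently and $uv=0$) operators, i.e.\ a pair of mutually annihilating nilpotent matrices. The combinatorial heart should be a classification of $R_q$-modules by pairs of partitions (or Young-diagram-like data) together with some gluing datum along the node, analogous to how $\Fq\bbracks{t}$-modules are classified by partitions. This should give a formula of the shape
\begin{equation*}
\Zhat_{R_q}(x) = \sum_{\lambda,\mu} \frac{x^{\text{(something)}}}{\abs{\Aut(\text{module})}},
\end{equation*}
which after summing one of the indices will hopefully collapse to the claimed factorization.

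The key steps, in order: First I would set up the bijection between finite-length $R_q$-modules and their combinatorial invariants, carefully computing $\dim_\Fq M$ (which equals $\dim_\Fq H^0$) and $\abs{\Aut M}$ in terms of that data. Second, I would organize the sum so that the "radical part'' (the part of $M$ killed by the maximal ideal, or the free-over-each-branch part) is separated from a "defect'' or gluing part; the factor $1/(xq^{-1};q^{-1})_\infty^2$ in \eqref{eq:factorize} strongly suggests that the main term corresponds to two independent copies of the $\Fq\bbracks{t}$ (Fine--Herstein) generating function — one for each branch $u=0$ and $v=0$ of the node — and $H_q(x)$ is the correction. Third, I would identify $H_q(x)$ as the generating function over the "overlap'' data: the sum $\sum_k \frac{x^{2k}q^{-k^2}}{(q^{-1};q^{-1})_k}(xq^{-k-1};q^{-1})_\infty$ has the look of a sum over the length $k$ of a cyclic $R_q$-module supported at the node together with a tail, so I would try to read off each factor ($q^{-k^2}$ from automorphisms, $x^{2k}$ from the doubled dimension contribution, $(q^{-1};q^{-1})_k^{-1}$ from a partial automorphism group, and the remaining $q$-Pochhammer from the "free'' tail). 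Fourth, meromorphic continuation: once the factorization \eqref{eq:factorize} is established, part (a) is immediate because $(xq^{-1};q^{-1})_\infty = \prod_{i\ge1}(1 - xq^{-i})$ is entire in $x$ (for $\abs q>1$) with simple zeros exactly at $x=q^i$, $i\ge1$, and I must check $H_q(x)$ is entire and does not vanish at those points, which gives (b) — the poles are exactly double poles at $x=q^i$. Finally for (c), I would substitute $x=1$ and $x=-1$ into \eqref{eq:thmB-2}: at $x=1$ the series $H_q(1) = \sum_k \frac{q^{-k^2}}{(q^{-1};q^{-1})_k}(q^{-k-1};q^{-1})_\infty$ should simplify (note $(q^{-k-1};q^{-1})_\infty = (q^{-1};q^{-1})_\infty / (q^{-1};q^{-1})_k$, so $H_q(1) = (q^{-1};q^{-1})_\infty \sum_k \frac{q^{-k^2}}{(q^{-1};q^{-1})_k^2}$), reducing to an identity that makes $\Zhat_{R_q}(1) = \sum_k \frac{q^{-k^2}}{(q^{-1};q^{-1})_k^2}$; this is a known Durfee-square-type identity equal to $1/(q^{-1};q^{-1})_\infty^2$, and similarly $x=-1$ should telescope via a signed Durfee identity into $1/(-q^{-2};q^{-2})_\infty$.

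The main obstacle I anticipate is the classification step and the bookkeeping of automorphism groups. Unlike the smooth case, $R_q$ is not a principal ideal ring, so finite-length $R_q$-modules are genuinely more complicated than a single partition — there is a nontrivial indecomposable-module theory (the node is a tame, or at least well-understood, curve singularity, but still one must pin down the full list of indecomposables and their $\Hom$/$\Aut$ data, or find a clever stratification of the module variety that sidesteps the full classification). Getting $\abs{\Aut M}$ exactly right, and then performing the double sum over combinatorial data and recognizing that one of the summations telescopes into the $q$-Pochhammer square in the denominator of \eqref{eq:factorize}, is where the real work lies; the $q$-series manipulations in (c), while delicate, are comparatively routine once the correct form of $H_q$ is in hand. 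An alternative route that avoids module classification is to compute motivically: stratify the variety of mutually annihilating nilpotent matrix pairs (equivalently, the moduli stack of finite-length sheaves on the nodal curve, via the Euler-product Proposition) and sum point-counts; this trades representation theory for possibly intricate geometry of the commuting-variety analogue, and I would expect the cleanest proof to blend both — use geometry for the global structure of the factorization and the explicit module/partition description to nail down $H_q$.
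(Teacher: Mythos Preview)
Your high-level outline for parts (a), (b), and (c) once the factorization \eqref{eq:factorize} is in hand is essentially correct and matches the paper: entireness of $H_q$, nonvanishing of $H_q$ at $x=q^i$, and the Euler identities $\sum_k t^{k^2}/(t;t)_k^2 = 1/(t;t)_\infty$ and $\sum_k t^{k(k-1)/2}x^k/(t;t)_k = (-x;t)_\infty$ are exactly what the paper uses. (One small slip: your displayed expression for $\Zhat_{R_q}(1)$ drops the prefactor $1/(q^{-1};q^{-1})_\infty$ coming from \eqref{eq:factorize}; the sum you wrote equals $1/(q^{-1};q^{-1})_\infty$, not its square.)

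Where your proposal diverges from the paper is the route to the factorization itself, and this is where the real difficulty sits. You propose either to classify finite-length $R_q$-modules and compute $\abs{\Aut M}$ directly, or to stratify the variety of mutually annihilating \emph{nilpotent} matrices. Both are feasible in principle but substantially harder than what the paper does: the indecomposable $R_q$-modules are a Gelfand--Ponomarev-style list (strings and bands) and the automorphism bookkeeping is genuinely unpleasant, while the nilpotent-pair count was only done directly later by Fulman and Guralnick. The paper's key simplification is to \emph{not} work over $R_q=\Fq\bbracks{u,v}/(uv)$ at all in the counting step. Instead it counts over the affine ring $\Fq[u,v]/(uv)$, i.e.\ counts pairs of \emph{arbitrary} (not nilpotent) matrices with $AB=BA=0$. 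This count is elementary: stratify by the nullity $k$ of $A$, observe that the choices for $B$ are exactly the linear maps $\Fq^n/\im A \to \ker A$, giving $q^{k^2}$ options, and sum. One obtains $\Zhat_{\Fq[u,v]/(uv)}(x)=\sum_{n\ge0}\sum_{k=0}^n \binom{n}{k}_t/(t;t)_k\, x^n$ with $t=q^{-1}$, and a two-Durfee-square reorganization of this partition sum yields the factorization $(x;t)_\infty^{-2}H_q(x)$. Only then does one pass to the node: by the Euler product, $\Zhat_{R_q}(x)=\Zhat_{\Fq[u,v]/(uv)}(x)/\Zhat_{\A^1\setminus 0}(x)^2$, and since $\Zhat_{\A^1\setminus 0}(x)=1/(1-x)$ this simply shifts $(x;t)_\infty^{-2}$ to $(xt;t)_\infty^{-2}$. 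So the geometric interpretation you gave for $H_q(x)$ (gluing/defect data at the node) is suggestive but not how the factorization is actually produced; it falls out of a $q$-series manipulation, and the module-theoretic meaning of the individual terms in $H_q(x)$ remains opaque.
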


Since $R_q=\Fq\bbracks{u,v}/(uv)$ is the completed local ring of a curve at a nodal singularity, the series $\Zhat_{R_q}(x)$ in Theorem \ref{thm:A} is precisely the local Cohen--Lenstra series of a nodal singularity. Theorem \ref{thm:A}(b) implies that the series $\Zhat_{R_q}(x)$ has a radius of convergence equal to $q$; the content of Theorem \ref{thm:A}(a)(b) lies in the description of $\Zhat_{R_q}(x)$ outside the domain of convergence. 
Theorem \ref{thm:A}(c) says that $\Zhat_{R_q}(\pm 1)$ have infinite product formulas while $\Zhat_{R_q}(x)$ is not known to have one beyond the factorization in \eqref{eq:factorize}. Theorem \ref{thm:A}(c) also has the following statistical interpretations. The value of $\Zhat_{R_q}(1)$ is the weighted count of finite-cardinality $R_q$-modules up to isomorphism, each weighted inversely by the size of the automorphism group. The numbers $(\Zhat_{R_q}(1)\pm \Zhat_{R_q}(-1))/2$ give the weighted counts of even- and odd-dimensional $R_q$-modules up to isomorphism, respectively. 

Let us briefly elaborate on some special features of Theorem \ref{thm:A}(b)(c) by looking at the smooth counterpart of Theorem \ref{thm:A}(b)(c). For comparison, let $S_q=\Fq\bbracks{t}$, the completed local ring of a curve at a smooth $\Fq$-point. Then we have
\begin{gather}
\Zhat_{S_q}(x)=\frac{1}{(xq^{-1};q^{-1})_\infty};\\
\Zhat_{S_q}(\pm 1)=\dfrac{1}{(\pm q^{-1};q^{-1})_\infty}.
\end{gather}

We note that $\Zhat_{R_q}(x)$ has a complicated factor $H_q(x)$ not present in $\Zhat_{S_q}(x)$. It is expected that $\Zhat_{R_q}(x)$ is more complicated than $\Zhat_{S_q}(x)$, since $\Zhat_{R_q}(x)$ and $\Zhat_{S_q}(x)$ encode counts of $R_q$- and $S_q$-modules, and the classification of $R_q$-modules (see \cite{bgs2009pairs} and its references) is much more complicated than the classically well-known classification of modules over $S_q=\Fq\bbracks{t}$. Less expected is the fact that $\Zhat_{R_q}(x)$ has a natural factorization at all. One may interpret $H_q(x)$ as a factor accounting for the nodal singularity. However, when specializing to $x=\pm 1$, the complicated factor in $\Zhat_{R_q}(x)$ seems to disappear, although the expression for $\Zhat_{R_q}(1)$ appears to be essentially different from $\Zhat_{R_q}(-1)$, unlike in the case of $\Zhat_{S_q}(\pm 1)$. 

Theorem \ref{thm:A} also implies that if $X$ is a reduced curve with only nodal singularities, and $\ttilde X$ is its resolution of singularity, then $\Zhat_X(x)/\Zhat_{\ttilde X}(x)$ is entire. This can be interpreted as that the Cohen--Lenstra series of a nodal singular curve, while being mysterious, is ``not too far'' from its smooth version.

Our proof of Theorem \ref{thm:A} depends on the following identity, which admits a proof by direct counting (Section \ref{sec:proof}). 

\begin{theorem}\label{thm:B}
As power series in $x$, we have
\begin{equation}\label{eq:thmB-1}
\displaystyle\sum_{n=0}^\infty \dfrac{\abs{\set{(A,B)\in \Mat_n(\Fq)\times\Mat_n(\Fq):AB=BA=0}}}{\abs{\GL_n(\Fq)}}x^n = \frac{1}{(x;q^{-1})_\infty^2} H_q(x),
\end{equation}
where $H_q(x)$ is defined in \eqref{eq:thmB-2}.
\end{theorem}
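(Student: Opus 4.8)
The plan is to prove Theorem \ref{thm:B} by a direct count of the mutually annihilating pairs, organized according to the rank of $A$, and then to identify the resulting generating function with $\frac{1}{(x;q^{-1})_\infty^2}H_q(x)$ by a $q$-series manipulation.

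First I would fix $n$ and partition $\set{(A,B)\in\Mat_n(\Fq)\times\Mat_n(\Fq):AB=BA=0}$ according to $r:=\rk A$. The condition $AB=BA=0$ is equivalent to $\im B\subseteq\ker A$ and $\im A\subseteq\ker B$. Hence, for a \emph{fixed} $A$ of rank $r$, a matrix $B$ is admissible precisely when it kills $\im A$ and lands in $\ker A$, i.e.\ it factors as $V\onto V/\im A\xrightarrow{\beta}\ker A\incl V$ with $\beta$ an arbitrary linear map; since $\dim(V/\im A)=\dim\ker A=n-r$, there are exactly $q^{(n-r)^2}$ such $B$, independently of $A$. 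The number of endomorphisms of $\Fq^n$ of rank exactly $r$ is $\binom{n}{r}_q^2\abs{\GL_r(\Fq)}$ (choose the image, choose the kernel, choose an isomorphism from $V/\ker$ onto the image, where $\binom{n}{r}_q$ is the Gaussian binomial coefficient). Therefore
\begin{equation*}
\abs{\set{(A,B)\in\Mat_n(\Fq)\times\Mat_n(\Fq):AB=BA=0}}=\sum_{r=0}^n\binom{n}{r}_q^2\,\abs{\GL_r(\Fq)}\,q^{(n-r)^2}.
\end{equation*}

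Next I would divide by $\abs{\GL_n(\Fq)}=q^{n^2}(q^{-1};q^{-1})_n$ and simplify, using $\abs{\GL_r(\Fq)}=q^{r^2}(q^{-1};q^{-1})_r$ together with $\binom{n}{r}_q=q^{r(n-r)}\binom{n}{r}_{q^{-1}}$. The powers of $q$ collect to $q^{2r(n-r)+r^2+(n-r)^2-n^2}=q^0$ because $r+(n-r)=n$, leaving the clean expression $\dfrac{\abs{\set{(A,B):AB=BA=0}}}{\abs{\GL_n(\Fq)}}=\displaystyle\sum_{r=0}^n\frac{(q^{-1};q^{-1})_n}{(q^{-1};q^{-1})_r\,(q^{-1};q^{-1})_{n-r}^2}$. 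Summing over $n$ and writing $s=n-r$, the left-hand side of \eqref{eq:thmB-1} becomes $\sum_{r,s\ge0}\binom{r+s}{s}_{q^{-1}}\frac{x^{r+s}}{(q^{-1};q^{-1})_s}$; performing the inner sum over $r$ with the $q$-binomial series $\sum_{r\ge0}\binom{r+s}{s}_{q^{-1}}x^r=\frac{1}{(x;q^{-1})_{s+1}}$ collapses this to
\begin{equation*}
\sum_{s\ge0}\frac{x^s}{(q^{-1};q^{-1})_s\,(x;q^{-1})_{s+1}}.
\end{equation*}

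It then remains to prove the purely formal identity $\sum_{s\ge0}\frac{x^s}{(q^{-1};q^{-1})_s(x;q^{-1})_{s+1}}=\frac{1}{(x;q^{-1})_\infty^2}H_q(x)$, and this is the step I expect to be the main obstacle. Multiplying through by $(x;q^{-1})_\infty$ and using $(xq^{-s-1};q^{-1})_\infty=(x;q^{-1})_\infty/(x;q^{-1})_{s+1}$ together with the definition \eqref{eq:thmB-2} of $H_q$, it is equivalent to
\begin{equation*}
\sum_{s\ge0}\frac{x^s\,(xq^{-s-1};q^{-1})_\infty}{(q^{-1};q^{-1})_s}=\sum_{k\ge0}\frac{x^{2k}q^{-k^2}}{(q^{-1};q^{-1})_k\,(x;q^{-1})_{k+1}}.
\end{equation*}
I would establish this by extracting the coefficient of $x^n$ on each side: on the left, expanding $(xq^{-s-1};q^{-1})_\infty$ by the $q$-binomial theorem and reindexing, the coefficient of $x^n$ is $\frac{1}{(q^{-1};q^{-1})_n}\sum_{i=0}^n(-1)^i\binom{n}{i}_{q^{-1}}q^{\binom{i}{2}-ni}$; on the right, expanding $(x;q^{-1})_{k+1}^{-1}$ by the $q$-binomial series, the coefficient of $x^n$ is $\sum_{0\le k\le n/2}\frac{q^{-k^2}(q^{-1};q^{-1})_{n-k}}{(q^{-1};q^{-1})_k^2\,(q^{-1};q^{-1})_{n-2k}}$. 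The problem thus reduces to a finite $q$-series identity, which I would prove either by massaging it into a classical $q$-hypergeometric summation ($q$-Chu--Vandermonde or $q$-Pfaff--Saalsch\"utz form), or by a Durfee-square-type dissection of the associated bipartite partitions; the delicate part is the exponent bookkeeping in the $q$-binomial expansions, where sign and power-of-$q$ errors are most likely to intrude.
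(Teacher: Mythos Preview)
Your counting step is correct and essentially identical to the paper's: you stratify by $\rk A$ where the paper stratifies by the nullity $k=n-r$, and both arrive at
\[
\Zhat(x)=\sum_{n\ge 0}\sum_{k=0}^n\frac{\binom{n}{k}_{t}}{(t;t)_k}\,x^n,\qquad t=q^{-1}.
\]
Your first $q$-series move (summing the $q$-binomial series over $r$) is a clean alternative to the paper's first Durfee-square argument and lands on the same intermediate single sum
\[
\Zhat(x)=\sum_{s\ge 0}\frac{x^s}{(t;t)_s\,(x;t)_{s+1}}.
\]
Indeed, interpreting $\frac{1}{(t;t)_s}$ and $\frac{1}{(x;t)_{s+1}}$ via Proposition~\ref{prop:q_identities}(a)(d), this sum is exactly $\sum_{\lambda\wzero} t^{|\lambda|-\sigma_1(\lambda)^2}x^{\ell(\lambda)}$, the paper's intermediate form.

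The gap is the final step. The identity you still need,
\[
\sum_{s\ge 0}\frac{x^s}{(t;t)_s\,(x;t)_{s+1}}
=\frac{1}{(x;t)_\infty}\sum_{l\ge 0}\frac{t^{l^2}x^{2l}}{(t;t)_l\,(x;t)_{l+1}},
\]
is precisely the substance of the paper's \emph{second} Durfee-square decomposition (splitting $\lambda$ below its first Durfee square into a second Durfee square, a partition to its right, and a partition-with-zeros below it; the key simplification is that $\frac{1}{(t;t)_k}\binom{k}{l}_t=\frac{1}{(t;t)_l(t;t)_{k-l}}$ separates the variables). Your ``Durfee-square-type dissection'' option is exactly this, and it works; but your other suggestion, reducing to $q$-Chu--Vandermonde or $q$-Pfaff--Saalsch\"utz, is not promising. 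Those are terminating \emph{summation} theorems (a single sum equals a closed product), whereas your coefficient identity
\[
\frac{1}{(t;t)_n}\sum_{i=0}^n(-1)^i\binom{n}{i}_{t}\,t^{\,ni-\binom{i}{2}}
=\sum_{0\le k\le n/2}\frac{t^{k^2}}{(t;t)_k}\binom{n-k}{k}_{t}
\]
equates two sums neither of which is a product; it does not match the shape of either summation. So as written, the proposal stops just short of the main combinatorial step, and the only one of your two indicated routes that actually closes the gap is the Durfee-square argument the paper carries out.
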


Theorem \ref{thm:B} leads to Theorem \ref{thm:A} by the general properties of the Cohen--Lenstra series  in Section \ref{sec:general}; the argument is given in Section \ref{sec:Hq}. 

The content of Theorem \ref{thm:B} lies not only in the enumeration of pairs of mutually annihilating matrices, but also in the unusual factorization identity \eqref{eq:thmB-1}. We point out that the left-hand side of \eqref{eq:thmB-1} is precisely $\Zhat_{\Fq[u,v]/(uv)}(x)$ (note the single bracket). We also point out that obtaining the specific expression of $H_q(x)$ in \eqref{eq:thmB-2} is the key to prove Theorem \ref{thm:A}. It is unknown if any part of Theorem \ref{thm:A} (even part (a) on the existence of a meromorphic continuation to $\C$) can be proved without knowing the exact formula of $H_q(x)$. 

Theorem \ref{thm:A} implies a formula that counts pairs of mutually annihilating \emph{nilpotent} matrices:
\begin{equation}\label{eq:nilpotent}
\displaystyle\sum_{n=0}^\infty \dfrac{\abs{\set{(A,B)\in \mathrm{Nilp}_n(\Fq)\times\mathrm{Nilp}_n(\Fq):AB=BA=0}}}{\abs{\GL_n(\Fq)}}x^n = \frac{1}{(xq^{-1};q^{-1})_\infty^2} H_q(x),
\end{equation}
where $\mathrm{Nilp}_n(\Fq)$ denotes the set of $n$ by $n$ nilpotent matrices over $\Fq$, and $H_q(x)$ is defined in \eqref{eq:thmB-2}. In particular, counting mutually annihilating nilpotent matrices can be viewed as a consequence of counting mutually annihilating matrices and the general properties of the Cohen--Lenstra series. After the posting of this paper, Fulman and Guralnick \cite{fulmanguralnick2022} found a direct proof of \eqref{eq:nilpotent} that does not require the enumeration of mutually annihilating matrices.

As an attempt to extend Theorem \ref{thm:A} to other curve singularities, we formulate the following questions:
\begin{question}\label{q:main}
Let $p$ be an $\Fq$-point of a reduced curve $X$.
\begin{enumerate}
\item Is it always true that $\Zhat_{X,p}(x)$ has a meromorphic continuation to all of $\C$?
\item If the answer to (a) is yes, is it true that the poles of $\Zhat_{X,p}(x)$ are given by the factor $(xq^{-1};q^{-1})_\infty^{-r(p)}$, where $r(p)$ is some numeric invariant attached to the pair $(X,p)$?
\item Do the special values $\Zhat_{X,p}(\pm 1)$ read the geometry of $X$ at $p$ in a meaningful way?
\end{enumerate}
\end{question}

A possibility that is compatible to all the known cases (i.e., smooth point and nodal singularity) is that $r(p)$ is the branching number of $X$ at $p$. In particular, we conjecture that
\begin{conjecture}\label{conj:main}
Let $p$ be an $\Fq$-point of a reduced curve $X$. Then $\Zhat_{X,p}(x)$ has a meromorphic continuation to all of $\C$ given by the factorization
\begin{equation}
\Zhat_{X,p}=\frac{1}{(xq^{-1};q^{-1})_\infty^r}H_{X,p}(x),
\end{equation}
where $r$ is the branching number of $X$ at $p$ and $H_{X,p}(x)$ is an entire power series. 
\end{conjecture}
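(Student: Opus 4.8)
\emph{A possible approach to Conjecture~\ref{conj:main}.} The plan is to reduce the statement to a comparison between $\Zhat_{X,p}$ and the Cohen--Lenstra series of the normalization, and then to isolate a ``gluing factor'' concentrated along the conductor. Replacing $\O_{X,p}$ by its completion $\O:=\Ohat_{X,p}$ (which does not change the Cohen--Lenstra series), we work with a one-dimensional, reduced, complete Noetherian $\Fq$-algebra. Let $\O\incl\ttilde\O$ be the normalization; since $\O$ is excellent and reduced, $\ttilde\O$ is a finite $\O$-module and decomposes as a product $\ttilde\O=\prod_{i=1}^{r}\O_i$ of complete discrete valuation rings, one per branch of $X$ at $p$, with $\O_i\cong\F_{q^{d_i}}\bbracks{t}$ for the residue degree $d_i$ of the $i$-th branch; here $r$ is the branching number. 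Because the Cohen--Lenstra series is multiplicative over finite products of rings (finite modules over $R_1\times R_2$ split as direct sums of modules over the factors, compatibly with automorphism groups and with $\Fq$-dimension), and because each $\Spec\O_i$ is a smooth curve point, Proposition~\ref{prop:zhat_known} gives
\[
\Zhat_{\ttilde\O}(x)=\prod_{i=1}^{r}\Zhat_{\O_i/\Fq}(x)=\prod_{i=1}^{r}\frac{1}{(x^{d_i}q^{-d_i};q^{-d_i})_\infty},
\]
which equals $(xq^{-1};q^{-1})_\infty^{-r}$ exactly when all branches are $\Fq$-rational. (If some $d_i>1$ the pole locus in Conjecture~\ref{conj:main} must be enlarged to the corresponding Galois orbits of the $q^i$; the rational-branch case already contains all of the difficulty, so I phrase the rest accordingly.) The conjecture then becomes the assertion that
\[
H_{X,p}(x):=\Zhat_{\O}(x)\,/\,\Zhat_{\ttilde\O}(x)
\]
extends from its disc of convergence to an entire function. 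Equivalently, via the Euler product (Proposition~\ref{prop:euler}), for any reduced curve $X$ with normalization $\ttilde X$ the quotient $\Zhat_X(x)/\Zhat_{\ttilde X}(x)$ should be entire, the smooth-point factors away from the singular locus cancelling; this is the global form of the remark made after Theorem~\ref{thm:A} for nodal curves.

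To build $H_{X,p}$ structurally I would use the conductor ideal $\mathfrak c=\{a\in\O:a\ttilde\O\subseteq\O\}$, which is simultaneously an ideal of $\O$ and of $\ttilde\O$; the quotients $\bbar\O:=\O/\mathfrak c$ and $\bbar{\ttilde\O}:=\ttilde\O/\mathfrak c$ are finite-dimensional over $\Fq$, and $\O$ is the fibre product $\ttilde\O\times_{\bbar{\ttilde\O}}\bbar\O$. For any finite-length $\O$-module $M$, the natural map $M\to\ttilde\O\otimes_\O M$ has kernel and cokernel annihilated by $\mathfrak c$ (the cokernel is $(\ttilde\O/\O)\otimes_\O M$, the kernel a subquotient of $\mathrm{Tor}_1^\O(\ttilde\O/\O,M)$), hence both are $\bbar\O$-modules, while $\ttilde\O\otimes_\O M$ is a finite-length $\ttilde\O$-module; moreover the gluing data reassembling $M$ from these pieces factors through the Artinian quotient $\bbar{\ttilde\O}\otimes_\O M$, so it only ``sees'' the conductor. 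Carrying out the resulting Hall-type bookkeeping --- summing over isomorphism classes of the pieces with automorphism weights --- should yield a factorization $\Zhat_{\O}(x)=\Zhat_{\ttilde\O}(x)\cdot G_{X,p}(x)$ in which $G_{X,p}(x)=H_{X,p}(x)$ is a ``motivic'' generating function assembled entirely out of the bounded data of the inclusion $\bbar\O\incl\bbar{\ttilde\O}$. For the node one has $\mathfrak c=\m$, $\bbar\O=\Fq$, and $\bbar{\ttilde\O}=\Fq\times\Fq$, and the construction should reproduce $G_{X,p}(x)=H_q(x)$ from \eqref{eq:thmB-2}. A natural first test case beyond the node is the ordinary $r$-fold point $\O=\Fq\bbracks{x_1,\dots,x_r}/(x_ix_j:i\ne j)$, whose conductor square is again of the form $\Fq\incl\prod_{i=1}^{r}\Fq$ and whose analysis should directly generalize the counting behind Theorem~\ref{thm:B}.

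The main obstacle is precisely the one flagged after Theorem~\ref{thm:B}: even granting such a factorization, there is no evident reason why $G_{X,p}(x)$ should extend beyond its radius of convergence, let alone be \emph{entire} rather than merely, say, rational with poles at roots of unity times powers of $q$. In the nodal case entireness was extracted by hand from the closed form \eqref{eq:thmB-2} together with the hypothesis $\abs q>1$, the decisive point being that the coefficients of $H_q$ decay like $q^{-ck^2}$ (Gaussian in the degree $k$), which dominates the at-most-$q^{O(k)}$ growth coming from the gluing. A uniform proof of Conjecture~\ref{conj:main} would require establishing such super-exponential decay for $G_{X,p}$ for \emph{every} reduced curve singularity --- the intuition being that all genuinely ``free'' growth has been absorbed into $\Zhat_{\ttilde\O}$ and only the finite conductor data remains --- but turning this intuition into a quantitative bound, uniform over the singularity, is exactly what is missing. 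Proving it for the ordinary $r$-fold point and for the ADE curve singularities, where $\bbar\O\incl\bbar{\ttilde\O}$ is small and explicit, would already be substantial evidence for the conjecture and would likely indicate the right form of the estimate.
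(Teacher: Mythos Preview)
The statement you are treating is Conjecture~\ref{conj:main}, and the paper does \emph{not} prove it; it is stated as an open problem, with Theorem~\ref{thm:A} covering only the nodal case. So there is no ``paper's own proof'' to compare against. Your write-up is not a proof either, and you are transparent about that: it is a strategy together with a clear identification of the obstruction. Read on those terms, your proposal is sound and in fact goes further than the paper's own discussion of the conjecture.

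Your reduction is correct and matches the paper's only structural remark about Conjecture~\ref{conj:main}: the equivalence with the global assertion that $\Zhat_X(x)/\Zhat_{\ttilde X}(x)$ is entire for every reduced curve $X$ with normalization $\ttilde X$. Your computation of $\Zhat_{\ttilde\O}(x)$ via multiplicativity over the branch factors and Corollary~\ref{cor:zhat} is right, and you catch a genuine imprecision in the conjecture as stated: if some branch has residue degree $d_i>1$, the pole set is the union of $\set{q^{j/d_i}\zeta:j\geq 1,\ \zeta^{d_i}=1}$ rather than $\set{q^j:j\geq 1}$, so the factor $(xq^{-1};q^{-1})_\infty^{-r}$ is only the right denominator when all branches are $\Fq$-rational. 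The paper does not comment on this.

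Your conductor-square heuristic is a reasonable organizing principle and correctly predicts that the ``interesting'' part of $\Zhat_\O$ should be governed by the finite data $\bbar\O\hookrightarrow\bbar{\ttilde\O}$. But you have also put your finger on exactly the gap the paper itself flags (the sentence following Theorem~\ref{thm:B}): even in the nodal case, entireness of $H_q(x)$ was read off from the explicit formula \eqref{eq:thmB-2}, not from any a priori estimate, and the paper says it is unknown whether even the \emph{existence} of a meromorphic continuation can be proved without that formula. Your proposed ``Gaussian decay'' mechanism is the right shape of what one would need, but nothing in the conductor bookkeeping you describe produces such a bound; the Hall-type sum over gluing data will typically have $q^{O(n)}$ many terms of size $q^{O(n)}$, and extracting $q^{-cn^2}$ decay from cancellation among them is precisely the missing idea. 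So as an approach this is well-aimed, but it remains a program rather than a proof, and the paper offers nothing beyond the nodal computation to compare it with.
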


An analogous generating series for the local Hilbert schemes of points supported on any reduced curve singularity $p$ was shown to be rational. Moreover, the denominator is a power whose exponent is the branching number at $p$; see \cite{brv2020motivic}. (Prior to that, the same fact was observed for planar singularities in \cite{maulikyun2013macdonald}, and was generalized to all Gorenstein singularities in \cite{goettscheshende2014refined}.) Conjecture \ref{conj:main} is also equivalent to a global restatement that if $\ttilde X$ is the resolution of singularity of any reduced curve $X$, then the quotient power series $\Zhat_X(x)/\Zhat_{\ttilde X}(x)$ is entire.

Finally, we point out that our proof of Theorem \ref{thm:B} requires nothing special about positive characteristic, and is ``parametric'' in the sense that the argument in Section \ref{subsec:count} essentially realizes the variety of mutually annihilating matrices as a disjoint union of certain strata, each of which is a fiber bundle. In particular, our proof implies the following identity as power series over the ring $\KVar\bbracks{\L^{-1}}$, where $\KVar$ is the Grothendieck ring of complex varieties and $\L=[\C^1]$ is the motive of the affine line:
\begin{equation}\label{eq:thm-motive}
\sum_{n=0}^\infty \dfrac{[\set{(A,B)\in \Mat_n(\C)^2:AB=BA=0}]}{\bracks{\GL_n(\C)}}x^n = \frac{1}{(x;\L^{-1})_\infty^2} \mathcal{H}(x),
\end{equation}
where
\begin{equation}
\mathcal{H}(x)=\sum_{k=0}^\infty \frac{x^{2k}\L^{-k^2}}{(\L^{-1};\L^{-1})_k} (x\L^{-k-1};\L^{-1})_\infty.
\end{equation}

Let $X=\set{uv=0}\subeq \C^2$ be the union of axes. The left-hand side of \eqref{eq:thm-motive} can be alternatively written as
\begin{equation}
\sum_{n=0}^\infty [\Coh_n(X)]x^n,
\end{equation}
where $[\Coh_n(X)]$ is the motive of the stack of coherent sheaves on $X$ of length $n$, in the sense of \cite{bryanmorrison2015motivic}. 

\subsection{Related work}
The coefficients of the Cohen--Lenstra series encode the point count of a wide family of varieties that arise as variants of the commuting variety. The commuting variety is the variety of pairs of commuting matrices, whose geometry was studied by Motzkin and Taussky \cite{motzkintaussky1955} and Gerstenhaber \cite{gerstenhaber1961}. Generalizations and variants of the commuting variety have been introduced and their geometry has been studied in both characteristic zero and positive characteristic; see \cite{baranovsky2001variety,chenlu2019,chenwang2020,cbs2002irreducible,richardson1979commuting}.   

One of the variants of the commuting variety is the variety of modules \cite{cbs2002irreducible}. Our Theorems \ref{thm:A} and \ref{thm:B}, in particular, give the point count of the varieties of modules over $\Fq\bbracks{u,v}/(uv)$ and $\Fq[u,v]/(uv)$, respectively. Their point count encodes statistical information (in the sense of Cohen--Lenstra \cite{cohenlenstra1984heuristics}) about the classification of finite-dimensional modules over $\Fq[u,v]/(uv)$ up to isomorphism. See \cite{bgs2009pairs, nrsb1975application, schroeder2004varieties} for studies of the structure of these varieties and the aforementioned classification problem. See also \cite{moschettiricolfi2018} for similar work about $\Fq[u,v]$. 

A stack quotient of the variety of $n$-dimensional modules by $\GL_n$ gives the moduli stack of length-$n$ coherent sheaves over a variety \cite{bryanmorrison2015motivic}. The Cohen--Lenstra series is closely related to the motive (in the Grothendieck ring of varieties) of this stack. Bryan and Morrison \cite{bryanmorrison2015motivic} reproved and refined the result of Feit and Fine about $\Zhat_{\Fq[u,v]}$ in terms of the motive of $\Coh_n(\C^2)$, the stack of length-$n$ coherent sheaves over $\C^2$. Having the notion of motive to play the role of point counting over a finite field, we shall focus on complex varieties for the rest of this section. 

While motivic knowledge about $\Coh_n(X)$ is mostly limited to the cases where $X$ is a smooth curve or a smooth surface, much more is known motivically about its framed\footnote{The terminology will be clear in the next paragraph, and it is in line with the notion of framed and unframed quiver varieties following Nakajima \cite{nakajima1999hilbert}.} counterpart, namely, the Hilbert scheme of points on $X$. The motive of the Hilbert scheme is known for any smooth surface \cite{cheah1994cohomology, goettsche1990betti, goettsche2001motive}, for any reduced singular curve \cite{brv2020motivic} (where an analog of Conjecture \ref{conj:main} holds), and even for some singular surfaces \cite{gns2017euler}. Conjecture \ref{conj:main} can be viewed as an unframed analog of \cite{brv2020motivic}, and one of our motivations to study Conjecture \ref{conj:main} is to understand how the Hilbert scheme and the stack of coherent sheaves are related motivically.

There are rich connections between the Hilbert scheme and the the stack of coherent sheaves. First, the Hilbert scheme $\Hilb^n(X)$ of a (possibly singular) complex variety $X$ parametrizes length-$n$ $\O_X$-modules $M$ together with a ``framing'' $\O_X\onto M$. Forgetting the framing gives a map from $\Hilb^n(X)$ to $\Coh_n(X)$. Such a map fits in the context of Nakajima quiver varieties \cite{nakajima1999hilbert}, and sometimes one can prove Kirwan surjectivity results \cite{mcgertynevins2018}. 

Another potential analog of \cite{brv2020motivic} concerns the motive of the Quot scheme. The Quot scheme of $d$-framed modules over $X$ parametrizes finite-length $\O_X$-modules $M$ together with a $d$-framing $\O_X^d\onto M$, where $d\geq 1$. The Hilbert scheme is thus the Quot scheme for $d=1$. A formula for the motive of the Quot schemes over a smooth curve is given in \cite{monavariricolfi2022}. The same work generalizes the formula to the nested Quot schemes. Studying the motive of the (nested) Quot schemes over singular curves appears to be of interest parallel to Conjecture \ref{conj:main}. Our forthcoming joint work \cite{huangjiang2022b} suggests that the motive of the stack of coherent sheaves over a singular curve can be understood from the motive of the Quot schemes for all $d\geq 1$, using the natural map from the Quot scheme to the stack of coherent sheaves defined by forgetting the framing.  

Apart from the motive in the Grothendieck ring of varieties, the Borel--Moore homology and the $K$-theory are also potentially interesting perspectives to look at the Hilbert scheme and the stack of coherent sheaves, especially for singular curves. If $X=\C^2$, then the Borel--Moore homology (or the $K$-theory) of $\Hilb^n(X)$ are connected to that of $\Coh_n(X)$; see \cite{schiffmannvasserot2013cherednik, schiffmannvasserot2013elliptic}. Of course, the Hilbert scheme and the stack of coherent sheaves over a singular curve are singular, and the Borel--Moore homology and the $K$-theory, unlike the motive in the Grothendieck ring of varieties, are sensitive to singularities and noncompactness. However, we point out that while $\Hilb^n(\C^2)$ is smooth, $\Coh_n(\C^2)$ is not.

\subsection{Organization of the paper}
In Section \ref{sec:preliminaries}, we give some preliminaries about partitions and $q$-series that will be used in the proof of Theorem \ref{thm:B}, given in Section \ref{sec:proof}. In Section \ref{sec:general}, we give a self-contained introduction to known properties of the Cohen--Lenstra series, part of which will be used to prove Theorem \ref{thm:A}. In Section \ref{sec:Hq}, we make some elementary observations about the series $H_q(x)$ appearing in the main theorems, and use them to finish the proof of Theorem \ref{thm:A}. In Section \ref{sec:further}, we discuss a possible connection between $H_q(x)$ and the partial theta function. 

\section{Preliminaries}\label{sec:preliminaries}
A \emph{partition} $\lambda$ is a finite nonincreasing sequence of positive integers $(\lambda_1,\dots,\lambda_\ell)$, each of which is called a \emph{part} of $\lambda$. The \emph{length} of $\lambda$ is the number of parts in $\lambda$, denoted $\ell(\lambda)$. The \emph{size} of $\lambda$ is $\abs{\lambda}:=\sum_i \lambda_i$. We denote by $a_i(\lambda)$ the number of parts of $\lambda$ of size $i$, so we can write down a partition as
\begin{equation}\lambda=a_1(\lambda)\cdot [1] + a_2(\lambda)\cdot [2]+\dots
\end{equation}

The \emph{Young diagram} of $\lambda$ follows the convention such that it has $\lambda_1$ boxes in the top row, and $\ell(\lambda)$ boxes in the leftmost column. We will often refer to a partition by its Young diagram. The \emph{conjugate} (or \emph{transpose}) partition of $\lambda$ is the partition, denoted $\lambda'$, whose Young diagram is the transpose of the Young diagram of $\lambda$. 

The (first) \emph{Durfee square} of $\lambda$ is the largest square that fits the top-left corner of its Young diagram. For $i\geq 1$, the $(i+1)$-st Durfee square is the Durfee square of the part of $\lambda$ below the $i$-th Durfee square. We denote the sidelength of the $i$-th Durfee square by $\sigma_i(\lambda)$, and define the \emph{Durfee partition} as
\begin{equation}
\Durf(\lambda):=(\sigma_1(\lambda),\sigma_2(\lambda),\dots).\end{equation}

Recall the definition of the $q$-\emph{Pochhammer symbol} in \eqref{eq:q_Pochhammer_fin} and \eqref{eq:q_Pochhammer_inf}. The $q$-\emph{binomial coefficient} is defined as
\begin{equation}
{n \brack k}_q:=\frac{(q;q)_n}{(q;q)_k (q;q)_{n-k}}.
\end{equation}

\section{Proof of Theorem \ref{thm:B}}\label{sec:proof}
We define $\Zhat(x):=\sum_{n=0}^\infty \dfrac{\abs{\set{(A,B)\in \Mat_n(\Fq)\times \Mat_n(\Fq):AB=BA=0}}}{\abs{\GL_n(\Fq)}}x^n$ and compute it in two steps. Though not needed in the proof, it is worth noting that $\Zhat(x)=\Zhat_{\Fq[u,v]/(uv)}$.

\subsection{Counting pairs of mutually annihilating matrices}\label{subsec:count}
Fix $n$. We count the number of pairs $(A,B)$ of $n\times n$ matrices such that $AB=BA=0$. 

First, we fix $A$ and let $0\leq k\leq n$ be the nullity of $A$ (so the rank of $A$ is $n-k$). Let $\im A=V$, $\ker A=W$, then $\dim V=n-k$, $\dim W=k$. We have
\begin{gather}
AB=0\iff A(\im B)=0\iff \im B\subeq W; \\
BA=0\iff B(\im A)=0\iff \ker B\supeq V.
\end{gather}

Hence, choosing $B$ that mutually annihilates $A$ is equivalent to picking a linear map from $\Fq^n/V \to W$. Since $\dim \Fq^n/V=\dim W=k$, there are $q^{k^2}$ choices of $B$. Notice that this number depends only on the rank of $A$.

It is a standard fact that the number of $n\times n$ $\Fq$-matrices of nullity $k$ is
\begin{equation}
{n \brack k}_q (q^n-1)(q^n-q)\dots (q^n-q^{n-k-1}).
\end{equation}
(As one of the proofs, we first choose an $(n-k)$-dimensional subspace $V\subeq \Fq^n$ as the image, and then choose a surjection $\Fq^n\to V$. The former has ${n \brack k}_q$ choices, and the latter has $(q^n-1)(q^n-q)\dots (q^n-q^{n-k-1})$ choices.)

Recalling that $\abs{\GL_n(\Fq)}=(q^n-1)(q^n-q)\dots (q^n-q^{n-1})$, it follows (after simplication) that
\begin{align}
\Zhat(x)&=\sum_{n=0}^\infty \sum_{k=0}^n \dfrac{1}{\abs{\GL_n(\Fq)}} q^{k^2} {n \brack k}_q (q^n-1)(q^n-q)\dots (q^n-q^{n-k-1}) x^n\\
&=\sum_{n=0}^\infty \sum_{k=0}^n \frac{{n\brack k}_t}{(t;t)_k} x^n,
\end{align}
where $t=q^{-1}$. 

\subsection{Factorization of $\Zhat(x)$} 
We collect some standard $q$-series identities that will be used in the factorization. 

\begin{proposition}[\cite{andrewspartitions}]\label{prop:q_identities}
As formal power series in $t$ \textup{(}and $x$ if applicable\textup{)}, we have
\begin{enumerate}
\item $\sum_{\ell(\lambda)\leq k} t^{\abs{\lambda}}=\dfrac{1}{(t;t)_k}$, where the sum is over all partitions with at most $k$ parts.
\item $\sum_{\lambda\subeq (n-k)\times k} t^{\abs{\lambda}} = {n\brack k}_t$. The notation here means that the sum is over all partitions whose Young diagram fits inside an $(n-k)\times k$ rectangle.
\item $\sum_{n=0}^\infty \dfrac{x^n}{(t;t)_n} = \dfrac{1}{(x;t)_\infty}$.
\item A \emph{partition with zeros} is a nonincreasing sequence of finitely many nonnegative integers $(\lambda_1,\lambda_2,\dots,\lambda_\ell)$, whose length is defined as $\ell$. Then we have the identity
\begin{equation}\sum_{\substack{\lambda \wzero \\ \lambda_1\leq k}} t^{\abs{\lambda}} x^{\ell(\lambda)} = \frac{1}{(1-x)(1-tx)\dots (1-t^kx)},\end{equation}
where the sum is over all partitions with zeros whose parts have sizes at most $k$.
\end{enumerate}
\begin{remark}
The set of partitions with zeros of length $n$ is in one-to-one correspondence with the set of usual partitions having at most $n$ parts, making the notion of partitions with zeros not strictly necessary. However, this notion is preferred here to emphasize that $n$ is remembered as part of the data. 
\end{remark}
\end{proposition}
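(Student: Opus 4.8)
The plan is to give short self-contained derivations of all four identities; each is an equality of formal power series (in $t$, or in $x$ with coefficients in $\Q[[t]]$), and all follow by elementary bookkeeping. I would prove (a) and (b) as statements about counting partitions contained in a box, deduce (d) from a telescoping recursion, and obtain (c) from a one-line functional equation (and remark that it is also the $k\to\infty$ case of (d)).

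For (a): a partition with at most $k$ parts is a sequence $\lambda_1\ge\cdots\ge\lambda_k\ge0$, and the substitution $\mu_i=\lambda_i-\lambda_{i+1}$ for $1\le i<k$, $\mu_k=\lambda_k$, is a bijection with arbitrary tuples $(\mu_1,\dots,\mu_k)\in\Z_{\ge0}^k$ under which $\abs\lambda=\sum_{i=1}^k i\mu_i$; hence $\sum_{\ell(\lambda)\le k}t^{\abs\lambda}=\prod_{i=1}^k(1-t^i)^{-1}=1/(t;t)_k$. (Equivalently, conjugate to partitions with all parts $\le k$.) For (b) I would induct on $n$. The cases $k=0$ and $k=n$ are immediate, since then the box admits only the empty partition and ${n\brack k}_t=1$. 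For $0<k<n$, split the partitions $\lambda$ inside the $(n-k)\times k$ rectangle according to the size of $\lambda_1$: if $\lambda_1\le k-1$ then $\lambda$ lies in an $(n-k)\times(k-1)$ rectangle, while if $\lambda_1=k$ then deleting the full first row leaves a partition in an $(n-k-1)\times k$ rectangle. This gives
\[
\sum_{\lambda\subeq(n-k)\times k}t^{\abs\lambda}={n-1\brack k-1}_t+t^{k}{n-1\brack k}_t,
\]
which is exactly the $q$-Pascal recurrence satisfied by ${n\brack k}_t$, so the claim follows by induction.

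For (d), write $g_k(x)$ for the left-hand side. A partition with zeros all of whose parts equal $0$ is just a string of zeros, one for each length $\ell\ge0$, so $g_0(x)=\sum_{\ell\ge0}x^\ell=1/(1-x)$. For $k\ge1$, peel off the leading parts equal to $k$: if there are $j\ge0$ of them, deleting them leaves a partition with zeros whose parts are all $\le k-1$, and the process is reversible, so $g_k(x)=\bigl(\sum_{j\ge0}t^{kj}x^j\bigr)g_{k-1}(x)=(1-t^kx)^{-1}g_{k-1}(x)$. Telescoping yields $g_k(x)=\prod_{i=0}^k(1-t^ix)^{-1}$, the asserted formula. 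For (c), set $f(x)=\sum_{n\ge0}x^n/(t;t)_n$; using $(t;t)_n=(1-t^n)(t;t)_{n-1}$ one finds $f(x)-f(tx)=\sum_{n\ge1}x^n/(t;t)_{n-1}=xf(x)$, so $(1-x)f(x)=f(tx)$, and iterating gives $f(x)=f(t^Nx)\big/\prod_{i=0}^{N-1}(1-t^ix)$ for every $N$; letting $N\to\infty$ yields $f(x)=1/(x;t)_\infty$. (Alternatively, letting $k\to\infty$ in (d) and grouping the partitions with zeros by length turns the left side into $\sum_{\ell\ge0}x^\ell/(t;t)_\ell$ by (a) and the right side into $1/(x;t)_\infty$.)

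I do not expect a genuine obstacle: every step above is a finite bookkeeping argument. The only point deserving a word is that all four identities are equalities of formal power series, so the two places where a limit appears — the iteration $N\to\infty$ in the functional-equation proof of (c), and the $k\to\infty$ passage in the alternative deduction of (c) from (d) — must be read coefficientwise. This is routine: in each total degree only finitely many terms ever contribute, since the $i$-th factor $(1-t^ix)^{-1}$ has no nonconstant term of $t$-degree below $i$, and in the functional-equation argument the coefficient of $x^m$ in $f(t^Nx)$ is $t^{Nm}/(t;t)_m$, which tends to $0$ for every $m\ge1$. Granting this, all four identities hold as stated.
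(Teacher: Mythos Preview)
Your proposal is correct; each of the four arguments goes through as written, and the care you take with the coefficientwise limits in (c) is appropriate.

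The paper's own proof is largely by citation to Andrews, with brief sketches: it cites \cite[Theorem 3.1]{andrewspartitions} for (b), and for (d) it identifies a partition with zeros with the tuple $(a_0,\dots,a_k)$ of part-multiplicities and reads off the product directly---which is your telescoping argument unrolled into a single step. For (c) the paper's suggested route is your \emph{alternative}: group partitions with zeros by length, apply (a), and reuse the (d) argument. Your primary proof of (c) via the functional equation $(1-x)f(x)=f(tx)$ is a genuinely different (and equally classical) derivation; it has the advantage of not relying on (a) or (d), at the cost of the extra remark about the formal limit. Your inductive proof of (b) via the $q$-Pascal recurrence is more self-contained than the paper's bare citation and is the standard argument behind the cited theorem. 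Overall your write-up is more detailed and independent than the paper's, but follows the same elementary combinatorial line.
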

\begin{proof}
Part (b) is \cite[Theorem 3.1]{andrewspartitions}. Parts (a)(c)(d) follow from elementary methods introduced in \cite[\S 1]{andrewspartitions}. To be precise, part (a) follows from \cite[Theorems 1.1 and 1.4]{andrewspartitions}. Part (d) follows from
\begin{itemize}
\item identifying each $\lambda$ in the sum with a tuple $(a_0,\dots,a_k)$, where $a_i$ is the number of $i$'s in $\lambda$; and
\item noting that $\abs{\lambda}=a_1+2a_2+\dots+ka_k$ and $\ell(\lambda)=a_0+\dots+a_k$. 
\end{itemize} 

Part (c) is due to Euler; see \cite[Corollary 2.2]{andrewspartitions}. An elementary proof involves recognizing the left-hand side as
\begin{equation}
\sum_{\lambda \wzero} t^{\abs{\lambda}} x^{\ell(\lambda)},
\end{equation}
using the formula of part (a), and then showing that it equals the right-hand side reusing the argument for part (d). 
\end{proof}

\begin{lemma}
We have
\begin{equation}
\Zhat(x)=\sum_{n=0}^\infty \sum_{k=0}^n \frac{{n\brack k}_t}{(t;t)_k} x^n=\sum_{\lambda\wzero} t^{\abs{\lambda}-\sigma_1(\lambda)^2}x^{\ell(\lambda)}
\end{equation}
\end{lemma}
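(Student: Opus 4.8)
The goal is to show the identity
\[
\Zhat(x)=\sum_{n=0}^\infty \sum_{k=0}^n \frac{{n\brack k}_t}{(t;t)_k} x^n=\sum_{\lambda\wzero} t^{\abs{\lambda}-\sigma_1(\lambda)^2}x^{\ell(\lambda)}.
\]

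\textbf{Plan.} The first equality is already established in Section \ref{subsec:count}, so the work is entirely in the second equality. The plan is to interpret the double sum combinatorially using parts (a) and (b) of Proposition \ref{prop:q_identities}, and then recognize the resulting sum over triples of partitions as a sum over a single partition with zeros, organized by its first Durfee square.

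\textbf{Key steps.} First I would fix $n$ and $k$ and expand the summand: by Proposition \ref{prop:q_identities}(b), ${n\brack k}_t=\sum_{\mu\subeq (n-k)\times k} t^{\abs{\mu}}$, and by Proposition \ref{prop:q_identities}(a) applied with parameter $k$, $\frac{1}{(t;t)_k}=\sum_{\ell(\nu)\leq k} t^{\abs{\nu}}$. Thus
\[
\frac{{n\brack k}_t}{(t;t)_k}x^n=\sum_{\substack{\mu\subeq (n-k)\times k\\ \ell(\nu)\leq k}} t^{\abs{\mu}+\abs{\nu}} x^n.
\]
Summing over all $n\geq k\geq 0$, the left side becomes $\Zhat(x)$, so $\Zhat(x)$ is a sum over quadruples $(n,k,\mu,\nu)$ with $0\leq k\leq n$, $\mu$ fitting in an $(n-k)\times k$ box, and $\nu$ having at most $k$ parts, weighted by $t^{\abs{\mu}+\abs{\nu}}x^n$. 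The heart of the proof is a bijection between such quadruples and partitions with zeros $\lambda$. Given $\lambda\wzero$ with $\sigma_1(\lambda)=k$ the sidelength of its first Durfee square, decompose the Young diagram of $\lambda$ into three regions: the $k\times k$ Durfee square itself (contributing $k^2$ to $\abs{\lambda}$), the piece $\mu$ to the right of the square (which is a partition fitting in a box of height $k$), and the piece $\nu$ below the square (a partition of width $\leq k$, equivalently — after transposing if one prefers — a partition with at most $k$ parts). Setting $n=\ell(\lambda)$, one checks $\ell(\lambda)=k+\ell(\nu)$... the precise matching of $n$ with the box dimension for $\mu$ requires care: $\mu$ lies in the rows occupied by the Durfee square, so $\mu$ has at most $k$ parts, i.e.\ $\mu'$ has parts $\leq k$; and since we want $\mu\subeq (n-k)\times k$ we need $\mu_1\leq n-k$, which holds because the total length of each of the top $k$ rows is at most $\lambda_1$ and $\lambda_1 = k+\mu_1$ while rows below contribute to make $\ell(\lambda)=n$; I will need to transpose one of the two auxiliary partitions so that the constraint ``$\mu$ has $\leq k$ columns beyond the square and the overall diagram has $n$ rows'' matches the box $(n-k)\times k$ cleanly. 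Under this decomposition $\abs{\lambda}=k^2+\abs{\mu}+\abs{\nu}=\sigma_1(\lambda)^2+\abs{\mu}+\abs{\nu}$, so $t^{\abs{\lambda}-\sigma_1(\lambda)^2}x^{\ell(\lambda)}=t^{\abs{\mu}+\abs{\nu}}x^n$, which is exactly the weight of the corresponding quadruple. Summing over all $\lambda\wzero$ then gives the claimed formula.

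\textbf{Main obstacle.} The routine calculations (verifying sizes and lengths add up) are easy; the one subtlety is bookkeeping the role of $n=\ell(\lambda)$ versus the box dimensions, and in particular getting the ``$\mu\subeq(n-k)\times k$'' constraint to come out correctly — this is where one must be careful about whether $\mu$ is read as the set of columns of $\lambda$ to the right of the Durfee square (a partition with $\leq k$ rows and arbitrarily many columns, but bounded using $n$) or its transpose. I expect the cleanest route is: let $\nu$ be the partition below the Durfee square, so $\ell(\nu)\leq k$ automatically and $\ell(\lambda)=k+\ell(\nu)$; let $\mu$ be the transpose of the partition to the right of the Durfee square, so $\mu$ has parts $\leq k$ and $\ell(\mu)=\lambda_1-k$. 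Then one must express things so $n=\ell(\lambda)$ appears as the box bound — this forces checking that $\mu$ fits in $(n-k)\times k$, i.e.\ $\ell(\mu)\le n-k$, i.e.\ $\lambda_1 - k \le \ell(\lambda)-k$, i.e.\ $\lambda_1\le\ell(\lambda)$; but this last inequality is \emph{not} always true for a partition. This signals that the correct decomposition must instead take $\mu\subeq(n-k)\times k$ to record the columns below-and-left differently, or the Durfee-square hook should be split as square $+$ arm $+$ leg with the arm assigned to the $\nu$-factor and the leg to the $\mu$-factor; I will sort out the correct assignment by testing it on small cases ($n=1,2$) before writing the general bijection, and present the map in the direction (partition with zeros)$\to$(quadruple) with an explicit inverse.
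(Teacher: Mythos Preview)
Your plan is exactly the paper's approach: expand both factors via Proposition~\ref{prop:q_identities}(a)(b) and reassemble the Durfee-square decomposition. The obstacle you flagged is real but self-inflicted---you have the two auxiliary partitions assigned to the wrong sides of the Durfee square. Swap them and the bookkeeping resolves itself with no transposes.

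Concretely, let the partition coming from ${n\brack k}_t$ (fitting in $(n-k)\times k$) be the piece \emph{below} the Durfee square, and let the partition coming from $1/(t;t)_k$ (with at most $k$ parts) be the piece to the \emph{right}. The right-of-Durfee region naturally sits in $k\times\infty$ (at most $k$ rows, unbounded width), which matches the constraint $\ell(\nu)\leq k$ on the nose; the below-Durfee region has parts of size $\leq k$ and occupies rows $k+1,\dots,\ell(\lambda)$, so it fits in $(n-k)\times k$ exactly when $\ell(\lambda)\leq n$. Thus the assembled $\lambda$ has $\sigma_1(\lambda)=k$ and $\ell(\lambda)\leq n$, not necessarily $\ell(\lambda)=n$. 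This is precisely where partitions with zeros enter: a partition with $\ell(\lambda)\leq n$ is the same datum as a partition with zeros of length exactly $n$, giving
\[
\sum_{n\geq 0}x^n\sum_{\ell(\lambda)\leq n}t^{|\lambda|-\sigma_1(\lambda)^2}
=\sum_{\lambda\wzero}t^{|\lambda|-\sigma_1(\lambda)^2}x^{\ell(\lambda)}.
\]
No transposing is needed, and the problematic inequality $\lambda_1\leq\ell(\lambda)$ never arises.
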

\begin{proof}
A partition $\lambda$ whose Durfee square has sidelength $k$ can be reconstructed uniquely with a partition $\lambda^{(1)}$ such that $\ell(\lambda^{(1)})\leq k$ (to be put to the right of the Durfee square) and a partition $\lambda'$ such that $\lambda'_1\leq k$ (to be put below the Durfee square).

By Proposition \ref{prop:q_identities}(1)(2), we have
\begin{align}
\sum_{n=0}^\infty \sum_{k=0}^n \frac{{n\brack k}_t}{(t;t)_k} x^n &= \sum_{n=0}^\infty x^n \sum_{k=0}^n \parens*{\sum_{\lambda^{(1)}\subeq k\times \infty} t^{\abs{\lambda^{(1)}}} \sum_{\lambda'_1\subeq (n-k)\times k} t^{\abs{\lambda'}} } \\
&=\sum_{n=0}^\infty x^n\sum_{\lambda\subeq n\times \infty} t^{\abs{\lambda}-\sigma_1(\lambda)^2} \label{eq:line1-2}\\
&=\sum_{n=0}^\infty x^n \sum_{\substack{\lambda \wzero\\\ell(\lambda)=n}} t^{\abs{\lambda}-\sigma_1(\lambda)^2} \label{eq:line1-3}\\
&=\sum_{\lambda \wzero} t^{\abs{\lambda}-\sigma_1(\lambda)^2} x^{\ell(\lambda)}.
\end{align}

Here, the line \eqref{eq:line1-2} is because $\lambda^{(1)}$ and $\lambda'$ consist of the part of $\lambda$ outside the Durfee square. Note that $\lambda\subeq n\times \infty$ means that $\lambda$ fits inside the $n$ (rows) $\times$ $\infty$ (columns) rectangle, which is equivalent to saying $\ell(\lambda)\leq n$. The line \eqref{eq:line1-3} is because specifying a partition whose length is at most $n$ is equivalent to specifying a partition with zeros whose length is exactly $n$. 
\end{proof}

To complete the factorization, we reconstruct $\lambda$ with zeros using the first two Durfee squares. Pick $k\geq l\geq 0$. Let $\lambda^{(1)}\subeq k\times \infty$ and $\lambda^{(2)}\subeq l\times (k-l)$ be usual partitions, and $\lambda''\subeq \infty \times l$ be a partition with zeros. Then we have a bijection
\begin{equation}
\set{(\lambda''\wzero,\lambda^{(1)},\lambda^{(2)})\text{ as above}}\to\set{\lambda \wzero: \sigma_1(\lambda)=k,\sigma_2(\lambda)=l}
\end{equation}
by putting $\lambda^{(1)}$ to the right of the first Durfee square, $\lambda^{(2)}$ to the right of the second Durfee square, and $\lambda''$ below the second Durfee square. 

We have
\begin{align}
\Zhat(x)&=\sum_{\lambda \wzero} t^{\abs{\lambda}-\sigma_1(\lambda)^2} x^{\ell(\lambda)}\\
&=\sum_{\substack{k\geq l\geq 0\\ \lambda'',\lambda^{(1)},\lambda^{(2)}}} t^{\abs{\lambda^{(1)}}+\abs{\lambda^{(2)}}+\abs{\lambda''}+l^2}x^{k+l+\ell(\lambda'')} \\
&=\sum_{k\geq l\geq 0} t^{l^2}x^{l+k} 
\parens*{
\sum_{\lambda^{(1)}\subeq k\times \infty} t^{\abs{\lambda^{(1)}}} 
\sum_{\lambda^{(2)}\subeq k\times \infty} t^{\abs{\lambda^{(2)}}} 
\sum_{\substack{\lambda'' \wzero \\ \lambda''\subeq \infty\times l}} t^{\abs{\lambda''}} x^{\ell(\lambda'')}
} \\
&=\sum_{k\geq l\geq 0} t^{l^2}x^{l+k} \frac{1}{(t;t)_k}{k\brack l}_t \frac{1}{(1-x)(1-tx)\dots (1-t^l x)},\label{eq:line2-4}
\end{align}
where the line \eqref{eq:line2-4} uses Proposition \ref{prop:q_identities}(1)(2)(4), in that order. 

Observe that
\begin{equation}\label{eq:line3}
\frac{1}{(t;t)_k}{k \brack l}_t = \frac{1}{(t;t)_k} \frac{(t;t)_k}{(t;t)_l (t;t)_{k-l}} = \frac{1}{(t;t)_l (t;t)_{k-l}}.
\end{equation}

Letting $b=k-l$, we have
\begin{align}
\Zhat(x)&=\sum_{k\geq l\geq 0} t^{l^2}x^{l+k} \frac{1}{(t;t)_k}{k\brack l}_t \frac{1}{(1-x)(1-tx)\dots (1-t^l x)}\\
&=\sum_{b,l\geq 0} t^{l^2}x^{l+(b+l)} \frac{1}{(t;t)_l (t;t)_b} \frac{1}{(1-x)(1-tx)\dots (1-t^l x)} \\
&=\parens*{\sum_{b=0}^\infty \frac{x^b}{(t;t)_b}}
\parens*{ \sum_{l=0}^\infty t^{l^2}x^{2l} \frac{1}{(t;t)_l} \frac{1}{(1-x)(1-tx)\dots (1-t^l x)} }\\
&=\frac{1}{(x;t)_\infty} \sum_{l=0}^\infty t^{l^2}x^{2l} \frac{1}{(t;t)_l} \frac{1}{(1-x)(1-tx)\dots (1-t^l x)},\label{eq:line4-4}
\end{align}
where the line \eqref{eq:line4-4} follows from Proposition \ref{prop:q_identities}(3). 

We remark that the key reason why this factorization works is that \eqref{eq:line3} does not depend explicitly on $k$ after simplification. 

Finally, recalling that $(x;t)_\infty = (1-x)(1-tx)(1-t^2 x)\dots$, we get
\begin{align}
\Zhat(x)&=\frac{1}{(x;t)_\infty} \sum_{l=0}^\infty t^{l^2}x^{2l} \frac{1}{(t;t)_l} \frac{1}{(1-x)(1-tx)\dots (1-t^l x)}\\
&=\frac{1}{(x;t)_\infty^2} \sum_{l=0}^\infty t^{l^2}x^{2l} \frac{1}{(t;t)_l} (1-t^{l+1}x)(1-t^{l+2}x)\dots \\
&=: \frac{1}{(x;t)_\infty^2} H_q(x),
\end{align}
which finishes the proof of Theorem \ref{thm:B}.

\section{General properties of the Cohen--Lenstra series}\label{sec:general}
We give a self-contained introduction about the well-known properties of the Cohen--Lenstra series. These properties are implicit in \cite{bryanmorrison2015motivic,guseinzade2004power} from the motivic point of view, while in this introduction, we restrict our attention to counting over finite fields. We point out that the argument in Proposition \ref{prop:zhat_known} that uses Proposition \ref{prop:euler} is essentially the use of ``power structures'' in \cite{bryanmorrison2015motivic,guseinzade2004power}. 

For the purpose of proving the main results of this paper, only Propositions \ref{prop:euler}, \ref{prop:comparison}(a)(b) and \ref{prop:zhat_known}(a) are needed. 

Let $R$ be an algebra over $\Fq$ with only finite quotient fields, and let $X$ be a variety over $\Fq$. Recall the definitions
\begin{equation}
\Zhat_{R}(x):=\sum_M \dfrac{1}{\abs{\Aut M}} x^{\dim_\Fq M},
\end{equation}
where $M$ ranges over all isomorphism classes of finite-cardinality $R$-modules, and
\begin{equation}
\Zhat_{X}(x):=\sum_M \dfrac{1}{\abs{\Aut M}} x^{\dim_\Fq H^0(X;M)},
\end{equation}
where $M$ ranges over all isomorphism classes of finite-length coherent sheaves over $X$, and $H^0(X;M)$ denotes the space of global sections of $M$. We denote both $\dim_\Fq M$ and $\dim_\Fq H^0(X;M)$ by $\deg M$. We also recall the local Cohen--Lenstra series for a closed point $p$ of $X$:
\begin{equation}
\Zhat_{X,p}(x):=\Zhat_{\O_{X,p}}(x).
\end{equation}

We state some basic properties.

\begin{proposition}
Let $R,X,p$ be as above. Then
\begin{enumerate}
\item $\Zhat_{R}(x)=\Zhat_{\Spec R}(x)$.
\item $\Zhat_{\O_{X,p}}(x)=\Zhat_{\Ohat_{X,p}}(x)$.
\item We have
\begin{equation}
\Zhat_{X,p}(x):=\sum_{M_p} \dfrac{1}{\abs{\Aut M_p}} x^{\deg M_p},
\end{equation}
where $M_p$ ranges over all isomorphism classes of finite-length coherent sheaves over $X$ that are supported at $p$. 
\end{enumerate}
\end{proposition}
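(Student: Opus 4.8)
The plan is to observe that all three statements follow from a single principle: in each case there is an equivalence between the two relevant categories of finite-length modules (or sheaves) that preserves both the automorphism group and the $\Fq$-dimension, so the two defining sums agree term by term under the induced bijection of isomorphism classes. I would first record a reduction used throughout: if $R$ is an $\Fq$-algebra all of whose quotient fields are finite, then an $R$-module has finite cardinality if and only if it has finite length --- any submodule is in particular an $\Fq$-subspace, so ascending chains of submodules stabilize, while conversely a finite-length module has composition factors $R/\m$ that are finite quotient fields and so is finite. For such $M$ one has $\dim_\Fq M = \deg M < \infty$. This applies to $R$, to $\O_{X,p}$, and to $\Ohat_{X,p}$, the latter two having the single finite quotient field $\kappa(p)$, so all the series in the statement make sense.

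For \emph{part (a)}, I would invoke the standard equivalence $M \mapsto \ttilde M$ between $R$-modules and quasicoherent sheaves on $\Spec R$, with quasi-inverse $\mathcal F \mapsto H^0(\Spec R;\mathcal F)$; it restricts to an equivalence between finite-length $R$-modules and finite-length coherent sheaves on $\Spec R$, it identifies $\Aut_R(M)$ with $\Aut(\ttilde M)$, and it satisfies $\dim_\Fq M = \dim_\Fq H^0(\Spec R;\ttilde M)$. Hence the terms of $\Zhat_R(x)$ and $\Zhat_{\Spec R}(x)$ match up. For \emph{part (b)}, given a finite-length $\O_{X,p}$-module $M$ I would use that $\m^n M = 0$ for some $n$, so the action factors through the finite Artinian ring $\O_{X,p}/\m^n = \Ohat_{X,p}/\hhat\m^n$, and the completion map $M \to M\otimes_{\O_{X,p}}\Ohat_{X,p}$ is an isomorphism. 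Base change to $\Ohat_{X,p}$ then gives an equivalence between finite-length $\O_{X,p}$-modules and finite-length $\Ohat_{X,p}$-modules, with quasi-inverse restriction of scalars along $\O_{X,p}\incl\Ohat_{X,p}$; because both module structures are really actions of the common quotient $\O_{X,p}/\m^n$, the automorphism groups and the $\Fq$-dimensions are unchanged, and the two series coincide.

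For \emph{part (c)}, since $\Zhat_{X,p}(x) = \Zhat_{\O_{X,p}}(x)$ by definition, it is enough to produce an $\Aut$- and $\deg$-preserving bijection between isomorphism classes of finite-length coherent sheaves on $X$ supported at $p$ and isomorphism classes of finite-length $\O_{X,p}$-modules. In one direction I would send $\mathcal F$ to its stalk $\mathcal F_p$: choosing an affine open $U=\Spec A$ around $p$ with $p$ corresponding to the maximal ideal $\p\subeq A$, the sheaf $\mathcal F$ vanishes off $\{p\}$, so $\mathcal F|_U = \ttilde M$ with $M$ of finite length and $\supp M = \{\p\}$; then $\mathrm{Ann}_A(M)$ is $\p$-primary, $A/\mathrm{Ann}_A(M)$ is already local, so $M\to M_\p$ is an isomorphism, and $H^0(X;\mathcal F) = H^0(U;\ttilde M) = M \cong M_\p = \mathcal F_p$, a finite-length $\O_{X,p}=A_\p$-module. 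In the other direction a finite-length $\O_{X,p}$-module $N$ is acted on by a finite Artinian quotient of $\O_{X,p}$, which is also a quotient of $A$, so it defines a coherent sheaf on $X$ supported at $p$; these constructions are mutually inverse, preserve automorphism groups, and give $\deg\mathcal F = \dim_\Fq\mathcal F_p = \dim_\Fq N = \deg N$. Summing over isomorphism classes yields the claimed formula.

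I do not expect a genuine obstacle here --- the proposition is foundational bookkeeping --- but the two points deserving a little care are, in part (c), the identification of $H^0$ of a point-supported sheaf with its stalk, i.e.\ the isomorphism $M\cong M_\p$, and, in part (b), the claim that passing between $\O_{X,p}$ and $\Ohat_{X,p}$ alters neither the underlying $\Fq$-vector space nor the automorphism group; both reduce to the single observation that a finite-length module is annihilated by a power of the maximal ideal, hence is really a module over a common finite Artinian quotient ring.
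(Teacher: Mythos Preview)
Your proposal is correct and follows essentially the same approach as the paper: part (a) via the module--sheaf correspondence on $\Spec R$, part (b) via the equivalence of finite-length modules over a local ring and its completion, and part (c) via the stalk-at-$p$ identification. You supply considerably more detail than the paper's one-line justifications, in particular the careful check that automorphism groups and $\Fq$-dimensions are preserved and the explicit observation that everything reduces to modules over a common Artinian quotient, but the underlying ideas are the same.
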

\begin{proof}~
\begin{enumerate}
\item This follows from the standard correspondence between modules over $R$ and quasicoherent sheaves over $\Spec R$.
\item This follows from the elementary fact that the classification of finite-length modules over a Noetherian local ring is the same as the classification of finite-length modules over its completion.
\item A coherent sheaf is supported at $p$ is determined by it stalk at $p$, thus corresponds to a module over $\O_{X,p}$. 
\end{enumerate}
\end{proof}

\begin{proposition}[Euler product] \label{prop:euler}
Let $X$ be a variety over $\Fq$. Then
\begin{equation}
\Zhat_X(x)=\prod_{p\in \cl(X)} \Zhat_{X,p}(x),
\end{equation}
where $\cl(X)$ is the set of closed points in $X$.
\end{proposition}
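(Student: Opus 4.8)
The plan is to decompose a finite-length coherent sheaf $M$ on $X$ according to its support, which is a finite set of closed points. The key structural fact is that any finite-length coherent sheaf $M$ on $X$ decomposes uniquely as a direct sum $M \cong \bigoplus_{p \in \cl(X)} M_p$, where $M_p$ is the subsheaf of $M$ supported (set-theoretically) at $p$; equivalently, $M_p$ is the stalk $M_p$ viewed as a skyscraper-type sheaf, or the localization at $p$. This is because the support of $M$ is a finite closed subset of dimension zero, so $M$ is supported on finitely many closed points, and for distinct closed points the corresponding torsion parts are disjoint; concretely, over the (semilocal) ring obtained by localizing at the finitely many points in the support, $M$ becomes a module over a product of local rings and hence splits accordingly.

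First I would set up the bijection at the level of isomorphism classes: the assignment $M \mapsto (M_p)_{p \in \cl(X)}$ gives a bijection between isomorphism classes of finite-length coherent sheaves on $X$ and tuples $(M_p)_{p}$, indexed by $\cl(X)$, where each $M_p$ is an isomorphism class of a finite-length sheaf supported at $p$ and all but finitely many $M_p$ are zero. Second, I would check multiplicativity of the two weights appearing in the series. For the automorphism group, since the decomposition $M = \bigoplus_p M_p$ is canonical and the summands have disjoint support, any automorphism of $M$ preserves each $M_p$, and conversely; hence $\Aut M \cong \prod_p \Aut M_p$, so $\abs{\Aut M} = \prod_p \abs{\Aut M_p}$ (a finite product, since almost all factors are $1$). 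For the degree, global sections are additive over direct sums, $H^0(X; M) = \bigoplus_p H^0(X; M_p)$, so $\deg M = \sum_p \deg M_p$, again a finite sum.

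With these two facts in hand, the proof is a formal manipulation:
\begin{align}
\Zhat_X(x) &= \sum_{M} \frac{1}{\abs{\Aut M}} x^{\deg M} = \sum_{(M_p)_p} \prod_{p \in \cl(X)} \frac{1}{\abs{\Aut M_p}} x^{\deg M_p} \\
&= \prod_{p \in \cl(X)} \parens*{ \sum_{M_p} \frac{1}{\abs{\Aut M_p}} x^{\deg M_p} } = \prod_{p \in \cl(X)} \Zhat_{X,p}(x),
\end{align}
where the interchange of sum and product is justified because, working with formal power series in $x$, the coefficient of any fixed power $x^n$ on either side is a finite sum: a tuple $(M_p)_p$ contributing to $x^n$ has $\sum_p \deg M_p = n$ with each $\deg M_p \geq 1$ for $M_p \neq 0$, so only finitely many $p$ are involved and only finitely many choices exist at each. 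I would also remark that this identity should be read as an identity of formal power series (equivalently, it holds coefficientwise), which sidesteps any convergence concerns.

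\textbf{Main obstacle.} The only real content is the canonical support decomposition $M \cong \bigoplus_p M_p$ and the resulting identification $\Aut M \cong \prod_p \Aut M_p$; everything else is bookkeeping. The subtle point to get right is that this decomposition is \emph{functorial} (not merely that such a decomposition exists), so that isomorphisms of $M$ correspond bijectively to tuples of isomorphisms of the $M_p$ — this is what makes the automorphism counts multiply rather than merely the isomorphism classes being in bijection. I expect this to follow cleanly from the fact that $\Hom(M_p, M_{p'}) = 0$ for $p \neq p'$ (disjoint supports), which simultaneously gives uniqueness of the decomposition and the product structure on endomorphism rings, hence on automorphism groups.
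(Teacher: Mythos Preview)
Your proposal is correct and matches the paper's proof essentially line for line: decompose $M$ by support, use $\Hom(M_p,M_{p'})=0$ for $p\neq p'$ to get $\Aut M\cong\prod_p\Aut M_p$, then expand the product formally. Your write-up is in fact a bit more careful than the paper's, since you make explicit the additivity of $\deg$ via $H^0$ and the coefficientwise justification of the sum--product interchange, both of which the paper leaves implicit.
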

\begin{proof}
For every finite-length coherent sheaf $M$ over $X$, we have a unique decomposition $M=\bigoplus_{p\in \cl(X)} M_p$ into finite-length coherent sheaves $M_p$ supported at $p$, with all but finitely many $M_p$'s being zero. For closed points $p\neq q$ and sheaves $M_p, M_q$ supported on $p,q$ respectively, we have
\begin{equation}
\Hom_X(M_p,M_q)=0.
\end{equation}

It follows that
\begin{equation}\Aut(M_{p_1}\oplus \dots \oplus M_{p_r}) \cong \Aut(M_{p_1})\times \dots \times \Aut(M_{p_r}).
\end{equation}

As a consequence,
\begin{align}
\Zhat_X(x)&=\sum_{M} \frac{1}{\abs{\Aut M}}x^{\deg M}\\
&=\sum_{(M_p:\,p\in \cl(X))} \frac{1}{\abs{\prod_{p}\Aut M_p}} x^{\sum_p \deg M_p}\\
&=\prod_{p\in \cl(X)} \sum_{M_p} \frac{1}{\abs{\Aut M_p}}x^{\deg M_p}\\
&=\prod_{p\in \cl(X)} \Zhat_{X,p}(x).
\end{align}
\end{proof}

For any subvariety $Y$ of $X$, if we set
\begin{equation}\Zhat_{X,Y}(x):=\prod_{p\in \cl(Y)}\Zhat_{X,p}(x)=\sum_{\supp M\subeq Y} \dfrac{1}{\abs{\Aut M}}x^{\deg M},\end{equation}
then the Euler product gives
\begin{equation}\label{eq:motivic}
\Zhat_{X}(x)=\Zhat_{U}(x)\cdot \Zhat_{X,Z}(x)
\end{equation}
for any open subvariety $U\subeq X$ and closed subvariety $Z\subeq X$ with $X\setminus Z=U$. This uses the fact that $\O_{U,p}=\O_{X,p}$ for all $p\in \cl(U)$, so that 
\begin{equation}
\Zhat_{U}(x)=\Zhat_{X,U}(x)\,\text{ for open $U\subeq X$.}
\end{equation}

As a warning, $\Zhat_{X,Z}(x)$ is not equal to $\Zhat_Z(x)$, because $\O_{Z,p}$ and $\O_{X,p}$ are not isomorphic in general. Thus, the $\Zhat$ construction is not motivic in the sense that $\Zhat_{X}(x)\neq \Zhat_{U}(x)\cdot \Zhat_{Z}(x)$.

The following relates the Cohen--Lenstra zeta function to the variety of modules, or its nilpotent variant. 

\begin{proposition}\label{prop:comparison}
Let $R=\dfrac{\Fq[t_1,\dots,t_m]}{(f_1,...,f_r)}$, where $t_1,\dots,t_m$ are indeterminates and $f_1,\dots,f_r$ are polynomials in $t_1,\dots,t_m$. Then
\begin{enumerate}
\item We have
\begin{equation}
\Zhat_R(x)= \sum_{n=0}^\infty \frac{\abs{M_n}}{\abs{\GL_n(\Fq)}} x^n,
\end{equation}
where
\begin{equation}
M_n := \set*{ (A_1,\dots,A_m)\; \left \lvert \;
\begin{gathered}
A_i\in \Mat_n(\Fq),A_iA_j=A_jA_i\\
f_s(A_1,\dots,A_m)=0\textup{ for }1\leq s\leq r
\end{gathered}
\right.
}
\end{equation}
\item If $f_1,\dots,f_r$ all vanish at the origin $0$, let $p=(t_1,\dots,t_m)R$ be the maximal ideal corresponding to $0\in X:=\Spec R$, then
\begin{equation}\Zhat_{R_p}(x)=\Zhat_{X,0}(x)= \sum_{n=0}^\infty \frac{\abs{N_n}}{\abs{\GL_n(\Fq)}} x^n,
\end{equation}
where
\begin{equation}
N_n := \set*{ (A_1,\dots,A_m)\; \left \lvert \;
\begin{gathered}
A_i\in \Nilp_n(\Fq),A_iA_j=A_jA_i\\
f_s(A_1,\dots,A_m)=0\textup{ for }1\leq s\leq r
\end{gathered}
\right.
}
\end{equation}
\item More generally, let $I\subeq J\subeq \Fq[t_1,\dots,t_m]$ be two ideals, and $Z = \Spec \Fq[t_1,\dots,t_m]/J$ $\subeq X=\Spec \Fq[t_1,\dots,t_m]/I$. Then 
\begin{equation}
\Zhat_{X,Z}(x)= \sum_{n=0}^\infty \frac{\abs{N_n}}{\abs{\GL_n(\Fq)}} x^n,
\end{equation}
where
\begin{equation}
N_n := \set*{ A=(A_1,\dots,A_m)\; \left \lvert \;
\begin{gathered}
A_i\in \Mat_n(\Fq), A_iA_j=A_jA_i\\
f(A)=0\textup{ for }f\in I, g(A)\in \Nilp_n(\Fq)\textup{ for }g\in J
\end{gathered}
\right.
}
\end{equation}
\end{enumerate}
\end{proposition}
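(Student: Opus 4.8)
The plan is to deduce all three parts from a single mechanism --- the dictionary between finite-dimensional modules and tuples of pairwise commuting matrices satisfying the imposed relations, combined with an orbit--stabilizer count --- and to obtain (a) and (b) as special cases of (c). I would start with (a). For fixed $n$, an $R$-module structure on the $\Fq$-vector space $\Fq^n$ is the same datum as an $\Fq$-algebra homomorphism $R\to\Mat_n(\Fq)$, equivalently a tuple $(A_1,\dots,A_m)$ of pairwise commuting matrices (the images of $t_1,\dots,t_m$) with $f_s(A_1,\dots,A_m)=0$ for all $s$ --- that is, a point of $M_n$. Two such structures give isomorphic $R$-modules exactly when the tuples lie in one orbit of $\GL_n(\Fq)$ acting by simultaneous conjugation, and the stabilizer of a tuple is the group of $\Fq$-linear automorphisms of $\Fq^n$ commuting with all $A_i$, i.e.\ $\Aut_R(M)$ for the associated module $M$. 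Hence the $\GL_n(\Fq)$-orbits on $M_n$ are the isomorphism classes of $n$-dimensional $R$-modules, and the orbit--stabilizer relation gives $\abs{M_n}=\sum_M \abs{\GL_n(\Fq)}/\abs{\Aut M}$, the sum over $n$-dimensional modules. Since every finite $R$-module is finite-dimensional over $\Fq$, dividing by $\abs{\GL_n(\Fq)}$, multiplying by $x^n$, and summing over $n$ reproduces $\Zhat_R(x)$.

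For (b) and (c) I would reuse this verbatim after identifying the extra support constraint. Recall from Section \ref{sec:general} that $\Zhat_{X,Z}(x)=\sum_{\supp M\subeq Z}\abs{\Aut M}^{-1}x^{\deg M}$, the sum over isomorphism classes of finite-length $\O_X$-modules whose support is contained in $Z$; for (b), $Z=\set{0}$ and this is by definition $\Zhat_{X,0}(x)=\Zhat_{R_p}(x)$. Presenting such an $M$ of dimension $n$ as commuting matrices $A=(A_1,\dots,A_m)$ with $f(A)=0$ for $f\in I$ (so that $M$ is a module over $R=\Fq[t_1,\dots,t_m]/I$), I note that $\supp M=V(\mathfrak a)$, where $\mathfrak a=\ker(R\to\Mat_n(\Fq))$ is the annihilator of $M$, since $M$ is finitely generated. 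Hence $\supp M\subeq Z=V(J/I)$ if and only if the image of $J$ in $R$ lies in $\rad(\mathfrak a)$, i.e.\ $g(A)$ is a nilpotent matrix for every $g\in J$ --- which may be checked on a finite generating set of $J$ because $\rad(\mathfrak a)$ is an ideal. This is precisely the defining condition of $N_n$, so the $\GL_n(\Fq)$-orbits on $N_n$ are the isomorphism classes in question, and the same orbit--stabilizer count gives (c). Part (a) is the case $J=I$ (the nilpotency condition is then vacuous, and $\Zhat_{X,X}=\Zhat_X=\Zhat_R$ by the Euler product, Proposition \ref{prop:euler}), and part (b) is the case $I=(f_1,\dots,f_r)$, $J=(t_1,\dots,t_m)$ (nilpotency of $g(A)$ for $g\in J$ then says exactly that each $A_i$ is nilpotent, and support at $\set{0}$ is equivalent to the maximal ideal $p$ acting nilpotently).

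There is no deep obstacle here --- the statement is essentially a reformulation --- so the real work is getting the dictionary exactly right. The two points that need care are: (i) matching the sheaf-theoretic data ``finite-length coherent sheaf on $X$, supported in $Z$, of degree $\dim_\Fq H^0$'' with the naive picture of a finite-dimensional $R/I$-module with its $\Fq$-dimension, which follows from the basic properties recalled at the start of Section \ref{sec:general} together with the observation that such a sheaf is a finite direct sum of sheaves supported at single closed points, whose global sections form its total stalk; and (ii) the translation of the topological condition $\supp M\subeq Z$ into nilpotency of $g(A)$, which must be carried out inside the quotient ring $R=\Fq[t_1,\dots,t_m]/I$ and rests on $\supp M=V(\mathfrak a)$ and passage to radicals. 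I would also flag at the outset that all modules in sight are finite over $\Fq$, so that the three generating series are literally the ones in the definitions.
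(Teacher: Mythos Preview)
Your proposal is correct and follows essentially the same approach as the paper: the module--matrix dictionary together with orbit--stabilizer for part (a), and then the identification of the support condition with nilpotency of the action of $J$ for parts (b) and (c). The only cosmetic difference is in the justification of the key equivalence in (c): you invoke $\supp M=V(\mathrm{Ann}\,M)$ and pass to radicals, whereas the paper argues directly via localization at maximal ideals to show $\supp M\subeq Z\iff J^nM=0$ for some $n$; both are standard and yield the same conclusion.
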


\begin{proof}~
\begin{enumerate}
\item Fix $n$ and consider an $\Fq$-vector space $V$ of dimension $n$. Giving $V$ a structure of an $R$-module is equivalent to specifying the actions of $t_1,\dots,t_m$ on $V$ as linear endomorphisms $A_1,\dots,A_m$, under the constraints 
\begin{equation}
A_iA_j=A_jA_i
\end{equation} and 
\begin{equation}
f_s(A_1,\dots,A_m)=0\text{ for }1\leq s \leq r.
\end{equation}
We denote by $(V;A_1,\dots,A_m)$ the $R$-module specified by the data above. Note that the constraints are satisfied if and only if $(A_1,\dots,A_m)\in M_n$.

Consider the action of $GL_n(\Fq)$ on the set $M_n$ by simultaneous conjugation:
\begin{equation}
g\cdot (A_1,\dots,A_m) := (gA_1g^{-1},\dots,gA_mg^{-1}).
\end{equation}

Given two $R$-modules $M=(V;A_1,\dots,A_m)$ and $M'=(V';A'_1,\dots,A'_m)$, an $R$-linear map $M\to M'$ is nothing but an $\Fq$-linear map $B: V\to V'$ such that $B\circ A_i=A'_i\circ B$ for all $i$. It follows that $(\Fq^n;A_1,\dots,A_m)$ and $(\Fq^n;A'_1,\dots,A'_m)$ are isomorphic as $R$-modules precisely if $(A_1,\dots,A_m)$ and $(A'_1,\dots,A'_m)$ are in the same orbit. Moreover, $g:\Fq^n\to \Fq^n$ gives an automorphism of $(\Fq^n;A_1,\dots,A_m)$ as an $R$-module if and only if $g$ fixes $(A_1,\dots,A_m)$. Denoting by $O_M$ the orbit corresponding to an $R$-module $M$ with $\dim_{\Fq} M=n$, the orbit-stabilizer theorem gives
\begin{equation}\sum_{\deg M=n}\frac{1}{\abs{\Aut M}} = \sum_{M} \frac{\abs{O_M}}{\abs{\GL_n(\Fq)}} = \frac{\abs{M_n}}{\abs{\GL_n(\Fq)}},\end{equation}
where the first and second sums are over the isomorphism classes of $R$-modules of degree $M$. 

It follows that
\begin{equation}
\Zhat_R(x)=\sum_{n=0}^\infty \sum_{\deg M=n} \frac{1}{\abs{\Aut M}} x^n = \sum_{n=0}^\infty \frac{\abs{M_n}}{\abs{\GL_n(\Fq)}} x^n.
\end{equation}

\item The proof is exactly the same, modulo the following observation:

\emph{The category of finite-length $R_p$-modules is a full subcatgory of finite-length $R$-modules consisting of those annihilated by some power of $p$.}

An $R$-module $(V;A_1,\dots,A_m)$ is annihilated by a power of $p=(t_1,\dots,t_m)R$ if and only if a power of $A_i$ annihilates $V$ for all $i$. This is equivalent to requiring that all $A_i$ are nilpotent. 

\item Let $R=\Spec \Fq[t_1,\dots,t_m]/I$ and $S=\Spec \Fq[t_1,\dots,t_m]/J$. It suffices to prove the following claim:

\emph{A finite-length $R$-module $M$ is supported on $Z=\Spec S$ if and only if $J^n M=0$ for some $n$.}

To prove the claim, assume $J^n M=0$. Consider any maximal ideal $p$ of $R$ that corresponds to a closed point in $X\setminus Z$, then $p$ does not contain $J$. So there exists $u\in J$ such that $u\notin p$. Since $u^n M=0$ and $u$ is invertible in $R_p$, the localization $M_p$ of $M$ at $p$ is zero. This shows that that $M$ is supported on $Z$.

Conversely, assume $M$ is supported on $Z$. Then there are maximal ideals $p_1,\dots,p_h$ corresponding to closed points in $Z$ such that
\begin{equation}
M=\bigoplus_{i=1}^h M_{p_i},
\end{equation}
where $M_{p_i}$ is the localization of $M$ at $p_i$. Note that $p_i\supeq J$. Since $M_{p_i}$ is a finite-length module over the local ring $R_{p_i}$, there is $n_i$ such that $p_i^{n_i} M_{p_i}=0$. It follows that $J^{n_i}M_{p_i}=0$. Taking $n=\max\set{n_1,\dots,n_h}$, we get $J^n M=\bigoplus_{i=1}^h J^n M_{p_i}=0$, proving the claim.
\end{enumerate}
\end{proof}

The following two corollaries translate facts about matrix enumeration into formulas for Cohen--Lenstra series.

\begin{corollary}
\label{cor:A1star}
The Cohen--Lenstra series for the punctured line $\A^1\setminus 0$ is
\begin{equation}
\Zhat_{\A^1\setminus 0}(x)=\Zhat_{\Fq[t,t^{-1}]}(x)=\frac{1}{1-x}.
\end{equation}
\end{corollary}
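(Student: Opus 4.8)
The plan is to reduce the computation to a known matrix-enumeration fact via Proposition \ref{prop:comparison}(a). Writing $\A^1 \setminus 0 = \Spec \Fq[t, t^{-1}]$, we present $\Fq[t,t^{-1}]$ as $\Fq[t,s]/(ts - 1)$, so that $m = 2$, $r = 1$, and $f_1 = ts - 1$. Then Proposition \ref{prop:comparison}(a) gives
\begin{equation}
\Zhat_{\A^1 \setminus 0}(x) = \sum_{n=0}^\infty \frac{\abs{M_n}}{\abs{\GL_n(\Fq)}} x^n,
\end{equation}
where $M_n$ is the set of pairs $(A, S) \in \Mat_n(\Fq)^2$ with $AS = SA$ and $AS = I$. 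The commutation relation is automatic once $AS = I$, and $AS = I$ forces $S = A^{-1}$; so $M_n$ is in bijection with $\GL_n(\Fq)$ (choose $A$ invertible, then $S$ is determined). Hence $\abs{M_n} = \abs{\GL_n(\Fq)}$ and every coefficient equals $1$, giving $\sum_{n \geq 0} x^n = 1/(1-x)$.

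First I would set up the presentation and invoke Proposition \ref{prop:comparison}(a), being careful to note that although $\Fq[t,t^{-1}]$ is not literally of the displayed form $\Fq[t_1,\dots,t_m]$, it is isomorphic to the quotient $\Fq[t,s]/(ts-1)$, which is. Second I would identify $M_n$ explicitly and observe the bijection $M_n \cong \GL_n(\Fq)$ via $A \mapsto (A, A^{-1})$; the inverse sends $(A,S)$ to $A$, using that $AS = I$ over a field means $A$ is invertible with $S = A^{-1}$ and the commutation condition is then vacuous. Third I would conclude by summing the geometric series.

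Alternatively, and perhaps more cleanly, one can bypass the matrix count entirely: a finite-length module over $\Fq[t,t^{-1}]$ is the same as a finite-dimensional $\Fq$-vector space with an invertible endomorphism, equivalently an $\Fq[t]$-module that is finite-length and on which $t$ acts invertibly — i.e.\ a finite-length $\Fq[t]$-module supported away from the origin. Since $\A^1 \setminus 0$ is an open subvariety of $\A^1$ and $\Zhat_{\A^1}(x) = \Zhat_{\Fq\bbracks{t}}(x) = \prod_{i \geq 1}(1-xq^{-i})^{-1}$ would require Proposition \ref{prop:zhat_known}, this route is heavier; so I would present the direct matrix argument above as the main proof and perhaps remark on the module-theoretic interpretation.

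I do not expect a genuine obstacle here: the only subtlety is the harmless one of presenting the Laurent polynomial ring as a finitely generated quotient so that Proposition \ref{prop:comparison}(a) literally applies, and checking that the commutativity constraint in the definition of $M_n$ is automatically satisfied. Everything else is a one-line count and a geometric series.
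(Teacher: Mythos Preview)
Your proposal is correct and follows essentially the same route as the paper: present $\Fq[t,t^{-1}]$ as $\Fq[u,v]/(uv-1)$, apply Proposition~\ref{prop:comparison}(a), observe that $M_n$ is in bijection with $\GL_n(\Fq)$, and sum the geometric series. The paper's version is a bit terser but the argument is identical.
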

\begin{proof}
To apply Proposition \ref{prop:comparison}(a), we need to rewrite the coordinate ring as $R:=\Fq[u,v]/(uv-1)$. The corresponding matrix counting problem concerns
\begin{equation}
M_n:=\set{(A,B)\in \Mat_n(\Fq)^2:AB=BA=I_{n\times n}}.
\end{equation}

Counting $M_n$ amounts to counting invertible $n\times n$ matrices. Therefore, we have
\begin{equation}
\Zhat_R(x)=\sum_{n=0}^\infty \frac{\abs{\GL_n(\Fq)}}{\abs{\GL_n(\Fq)}} x^n = \frac{1}{1-x}.
\end{equation}
\end{proof}

\begin{corollary}\label{cor:zhat}
We have
\begin{gather}
\Zhat_{\Fq\bbracks{t}}(x)=\prod_{i=1}^\infty (1-q^{-i}x)^{-1}, \label{eq:result1}\\
\Zhat_{\Fq\bbracks{u,v}}(x)=\prod_{i,j\geq 1}(1-q^{-j}x^i)^{-1}.
\end{gather}
\end{corollary}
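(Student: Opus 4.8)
The plan is to apply Proposition \ref{prop:comparison}(b) to express each $\Zhat$ as a matrix-counting generating function, and then recognize the resulting sum as a known product via the classical theory of nilpotent (resp.\ commuting-nilpotent) matrices over $\Fq$. For the first identity, $S_q = \Fq\bbracks{t}$ is the completion of $\O_{\A^1,0}$ where $\A^1 = \Spec\Fq[t]$; by Proposition \ref{prop:comparison}(b) with $m=1$, $r=0$, the series $\Zhat_{\Fq\bbracks{t}}(x)=\Zhat_{\A^1,0}(x)$ equals $\sum_n \abs{\Nilp_n(\Fq)}/\abs{\GL_n(\Fq)}\,x^n$. Here I would invoke the count of Fine and Herstein \cite{fineherstein1958} (cited in the introduction): $\abs{\Nilp_n(\Fq)} = q^{n^2-n}$, so the coefficient of $x^n$ is $q^{n^2-n}/\abs{\GL_n(\Fq)}$. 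The remaining step is purely a $q$-series manipulation: using $\abs{\GL_n(\Fq)} = q^{n^2}(q^{-1};q^{-1})_n \cdot q^{\binom n2}\!/q^{\binom n2}$ — more precisely $\abs{\GL_n(\Fq)}= q^{n^2}\prod_{i=1}^n(1-q^{-i})$ — one gets $q^{n^2-n}/\abs{\GL_n(\Fq)} = q^{-n}/(q^{-1};q^{-1})_n$, and then Proposition \ref{prop:q_identities}(c) with $t=q^{-1}$ and the variable $q^{-1}x$ in place of $x$ yields $\sum_n (q^{-1}x)^n/(q^{-1};q^{-1})_n = 1/(q^{-1}x;q^{-1})_\infty = \prod_{i\geq 1}(1-q^{-i}x)^{-1}$, as claimed.

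For the second identity, $\Fq\bbracks{u,v}$ is the completion of $\O_{\A^2,0}$ with $\A^2 = \Spec\Fq[u,v]$; Proposition \ref{prop:comparison}(b) with $m=2$, $r=0$ gives $\Zhat_{\Fq\bbracks{u,v}}(x) = \sum_n \abs{\set{(A,B)\in\Nilp_n(\Fq)^2 : AB=BA}}/\abs{\GL_n(\Fq)}\,x^n$. This is precisely the generating function for pairs of commuting nilpotent matrices evaluated by Fulman and Guralnick \cite{fulmanguralnick2018}, mentioned in the introduction as the case $R=\Fq\bbracks{u,v}$ of the Cohen--Lenstra series; their result is the product formula $\prod_{i,j\geq 1}(1-q^{-j}x^i)^{-1}$. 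Alternatively — and this is the route I would actually write out to keep the argument self-contained — one can deduce it from the Feit--Fine formula \cite{feitfine1960pairs} for $\Zhat_{\Fq[u,v]}(x)$ together with the Euler product (Proposition \ref{prop:euler}): since $\Fq[u,v]$ has closed points that are either $\Fq$-rational (all analytically isomorphic to the origin, hence contributing $\Zhat_{\Fq\bbracks{u,v}}(x)$ each) or of higher degree, the local factor at a rational point is isolated by comparing the $\A^2$ Euler product against the known global answer; but the cleanest statement is simply to cite \cite{fulmanguralnick2018} directly.

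The only genuine obstacle is bookkeeping: making sure the substitution $t = q^{-1}$ is applied consistently and that the shift by $q^{-1}$ inside the Pochhammer symbol (i.e.\ $(xq^{-1};q^{-1})_\infty$ versus $(x;q^{-1})_\infty$) comes out right, which is exactly the distinction between $\Zhat_{S_q}$ and $\Zhat(x) = \Zhat_{\Fq[u,v]/(uv)}$ highlighted after Theorem \ref{thm:B}. In the nilpotent (completed local ring) setting the extra factor of $q^{-n}$ coming from $\abs{\Nilp_n}/q^{n^2} = q^{-n}$ is precisely what produces the shift, so I would flag this explicitly. Beyond that, both identities are immediate consequences of Proposition \ref{prop:comparison}(b) and results already in the literature, so the corollary requires no new ideas — it is recorded here only because these two special cases are the inputs needed (via the Euler product) to reduce Theorem \ref{thm:A} for a global nodal curve to the local computation at the node.
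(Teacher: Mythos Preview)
Your proposal is correct and follows essentially the same approach as the paper: apply Proposition~\ref{prop:comparison}(b) and then invoke Fine--Herstein \cite{fineherstein1958} and Fulman--Guralnick \cite{fulmanguralnick2018}, respectively. The paper's proof is a one-sentence citation of these two results; you have simply unpacked the first case by writing out the $q$-series step $\sum_n q^{-n}x^n/(q^{-1};q^{-1})_n = 1/(q^{-1}x;q^{-1})_\infty$ explicitly, which is fine and correct.
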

\begin{proof}
Given Proposition \ref{prop:comparison}(b), the two formulas follow from the matrix counting results of Fine and Herstein \cite{fineherstein1958} and Fulman and Guralnick  \cite{fulmanguralnick2018}, respectively. 
\end{proof}

\begin{proposition} \label{prop:zhat_known}

Let $Z_X(x)$ be the Hasse--Weil zeta function of $X$ \tu{\cite[p.\ 449]{hartshorneAG}}. Then
\begin{enumerate}
\item If $X$ is a smooth curve over $\Fq$, then
\begin{equation}
\Zhat_X(x) = \prod_{i=1}^\infty Z_X(q^{-i}x) \in \C\bbracks{x}.
\end{equation}
\item If $X$ is a smooth surface over $\Fq$, then
\begin{equation}
\Zhat_X(x) = \prod_{i,j\geq 1} Z_X(q^{-j}x^i) \in \C\bbracks{x}.
\end{equation}
\end{enumerate} 
\end{proposition}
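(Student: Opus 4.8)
The statement to prove is Proposition~\ref{prop:zhat_known}: for a smooth curve $X/\Fq$, $\Zhat_X(x) = \prod_{i\ge1} Z_X(q^{-i}x)$, and for a smooth surface, $\Zhat_X(x) = \prod_{i,j\ge1} Z_X(q^{-j}x^i)$.

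My plan is to reduce the global computation to the local one via the Euler product (Proposition~\ref{prop:euler}), and then to evaluate each local factor $\Zhat_{X,p}(x)$ using the fact that the completed local ring $\Ohat_{X,p}$ is a power series ring in one variable (curve case) or two variables (surface case) over the residue field $\kappa(p)$, which is a finite extension $\F_{q^{\deg p}}$ of $\Fq$. First I would recall that $Z_X(x) = \prod_{p\in\cl(X)}(1-x^{\deg p})^{-1}$, so that $\prod_{i\ge1} Z_X(q^{-i}x) = \prod_{p}\prod_{i\ge1}(1-q^{-i\deg p}x^{\deg p})^{-1}$ in the curve case, and similarly with an extra product over $j$ and a power $x^{i\deg p}$ in the surface case. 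Comparing with $\Zhat_X(x) = \prod_p \Zhat_{X,p}(x)$, it suffices to prove the purely local identities
\begin{equation}
\Zhat_{\F_{q^d}\bbracks{t}/\Fq}(x) = \prod_{i\ge1}(1-q^{-id}x^d)^{-1},\qquad
\Zhat_{\F_{q^d}\bbracks{u,v}/\Fq}(x) = \prod_{i,j\ge1}(1-q^{-jd}x^{id})^{-1},
\end{equation}
for every $d\ge1$, where the subscript $/\Fq$ records that $\dim$ and hence the exponent of $x$ are taken over $\Fq$, not over the residue field.

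The key observation handling the base-field discrepancy is that if $R$ is an $\F_{q^d}$-algebra with finite residue fields, then a finite-length $R$-module $M$ has $\dim_\Fq M = d\cdot\dim_{\F_{q^d}} M$, while automorphism groups are computed over $R$ regardless; hence $\Zhat_{R/\Fq}(x) = \Zhat_{R/\F_{q^d}}(x^d)$. Applying this with $R = \F_{q^d}\bbracks{t}$ and $R = \F_{q^d}\bbracks{u,v}$ reduces everything to the equal-characteristic computations already recorded in Corollary~\ref{cor:zhat}, namely $\Zhat_{\F_{q^d}\bbracks{t}}(y) = \prod_{i\ge1}(1-(q^d)^{-i}y)^{-1}$ and $\Zhat_{\F_{q^d}\bbracks{u,v}}(y) = \prod_{i,j\ge1}(1-(q^d)^{-j}y^i)^{-1}$, which come from the Fine--Herstein and Fulman--Guralnick matrix counts. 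Substituting $y = x^d$ and using $(q^d)^{-i} = q^{-id}$ gives exactly the displayed local identities. Assembling the local factors via the Euler product and regrouping the double (resp.\ triple) product over closed points then yields the claimed formulas; one should note in passing that $\prod_i Z_X(q^{-i}x)$ and $\prod_{i,j} Z_X(q^{-j}x^i)$ converge as elements of $\C\bbracks{x}$ because each closed point contributes terms of $x$-degree $\ge \deg p \ge 1$ and, for fixed total $x$-degree, only finitely many $(i,j,p)$ are involved.

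I expect the main obstacle to be bookkeeping rather than conceptual: making the base-change identity $\Zhat_{R/\Fq}(x)=\Zhat_{R/\F_{q^d}}(x^d)$ precise and watching the exponent $d=\deg p$ propagate correctly through the Euler product. A secondary point that deserves care is justifying the convergence and the legitimacy of rearranging the infinite products over $\cl(X)$, $i$, and $j$ as formal power series in $x$; this is where the finiteness (for each fixed $x$-degree, only finitely many contributing factors) is used. Everything else is a direct substitution into results already available in the excerpt, so the proof should be short once these two points are nailed down.
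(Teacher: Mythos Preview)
Your proposal is correct and follows essentially the same approach as the paper: Euler product to reduce to local factors, Cohen structure theorem to identify $\Ohat_{X,p}$ with a power series ring over $\kappa_p=\F_{q^{\deg p}}$, the base-change identity $\Zhat_{R/\Fq}(x)=\Zhat_{R/\F_{q^d}}(x^d)$ coming from $\dim_{\Fq}M=d\cdot\dim_{\F_{q^d}}M$, and finally Corollary~\ref{cor:zhat} together with the Euler product form of $Z_X$. The paper does not make the formal-convergence remark explicit, but otherwise your argument matches it step for step.
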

\begin{proof}~`
\begin{enumerate}
\item The Euler product (Proposition \ref{prop:euler}) gives
\begin{equation}
\Zhat_X(x)=\prod_{p\in \cl(X)} \Zhat_{X,p}(x)=\prod_{p\in \cl(X)} \Zhat_{\Ohat_{X,p}}(x).
\end{equation}

Since $X$ is a smooth curve, $\Ohat_{X,p}$ is a complete regular local ring of dimension one. By the Cohen structure theorem, $\Ohat_{X,p}\cong \kappa_p\bbracks{t}$, where $\kappa_p$ is the residue field of $\Ohat_{X,p}$. By Corollary \ref{cor:zhat},
\begin{equation}
\Zhat_{\kappa_p\bbracks{t}/\kappa_p}(x)=\sum_{M/\kappa_p\bbracks{t}} \frac{1}{\abs{\Aut M}}x^{\dim_{\kappa_p} M} = \prod_{i=1}^\infty (1-q^{-i\deg p}x)^{-1},
\end{equation}
where $\deg p$ is the degree of the field extension $[\kappa_p:\Fq]$, so that $q^{\deg p}$ is the cardinality of $\kappa_p$. 

Noting that $\dim_{\Fq} M = (\deg p)\dim_{\kappa_p} M$ for any $\kappa_p$-vector space $M$, we have
\begin{align}
\Zhat_{\Ohat_{X,p}}(x) &= \sum_{M/\kappa_p\bbracks{t}} \frac{1}{\abs{\Aut M}}x^{\dim_{\Fq} M}\\
&=\sum_{M/\kappa_p\bbracks{t}} \frac{1}{\abs{\Aut M}}(x^{\deg p})^{\dim_{\kappa_p} M} \\
&=\prod_{i=1}^\infty (1-q^{-i\deg p}x^{\deg p})^{-1}.
\end{align}

Recalling the Euler product of the Hasse--Weil zeta function
\begin{equation}
Z_X(x)=\prod_{p\in \cl(X)}(1-x^{\deg p})^{-1},
\end{equation}
we get
\begin{align}
\Zhat_X(x) &= \prod_{p\in \cl(X)} \prod_{i=1}^\infty (1-q^{-i\deg p}x^{\deg p})^{-1} \\
&=\prod_{i=1}^\infty \prod_{p\in \cl(X)} (1-(q^{-i}x)^{\deg p})^{-1} \\
&=\prod_{i=1}^\infty Z_X(q^{-i}x).
\end{align}

\item By the Cohen structure theorem, for any closed point $p$ on a smooth surface $p$, we have $\Ohat_{X,p}\cong \kappa_p\bbracks{u,v}$. Applying the same argument to the corresponding formula in Corollary \ref{cor:zhat}, we have
\begin{align}
\Zhat_{\Ohat_{X,p}}(x) &= \Zhat_{\kappa_p\bbracks{u,v}/\kappa_p}(x^{\deg p})\\
& = \ev*{\parens*{\prod_{i,j\geq 1}(1-(q^{\deg p})^{-j}x^i)^{-1}}}_{x\mapsto x^{\deg p}} \\
&=\prod_{i,j\geq 1}(1-(q^{-j}x^i)^{\deg p})^{-1}.
\end{align}

It follows that
\begin{align}
\Zhat_X(x) &= \prod_{p\in \cl(X)}\Zhat_{\Ohat_{X,p}}(x)\\
&= \prod_{i,j\geq 1} \prod_{p\in \cl(X)} (1-(q^{-j}x^i)^{\deg p})^{-1}\\
&= \prod_{i,j\geq 1} Z_X(q^{-j}x^i).
\end{align}
\end{enumerate}
\end{proof}

\section{Properties of $H_q(x)$ and Proof of Theorem \ref{thm:A}}\label{sec:Hq}
Theorem \ref{thm:A}(b) almost follow immediately from Theorem \ref{thm:B}: let $X=\Spec \Fq[u,v]/(uv)$ be the union of $x$- and $y$-axes on a plane, and let $p$ be the origin, then $\Zhat_{R_q}(x)$ in Theorem \ref{thm:A} is $\Zhat_{X,p}(x)$. By \eqref{eq:motivic}, we have

\begin{align}
\Zhat_{X,p}(x) &=\frac{\Zhat_X(x)}{\Zhat_{X\setminus p}(x)} \\
&= \frac{(x;q^{-1})_\infty^{-2} H_q(x)}{(\Zhat_{\A^1\setminus 0}(x))^2}\\
&= \frac{(x;q^{-1})_\infty^{-2} H_q(x)}{(1-x)^{-2}}\\
&= (xq^{-1};q^{-1})_\infty^{-2} H_q(x),\label{eq:thmAb_proof}
\end{align}
where $\A^1\setminus 0$ denotes an affine line minus one point and we note that $X\setminus p$ is two copies of $\A^1\setminus 0$. The Cohen--Lenstra series of $\A^1\setminus 0$ is computed in Corollary \ref{cor:A1star}. This finishes the proof of Theorem \ref{thm:A}(b) except the claim that $q^i, i\geq 1$ are actually double poles of $\Zhat_{\Fq\bbracks{u,v}/(uv)}(x)$. This requires that $H_q(q^i)\neq 0$ for $i\geq 1$, which turns out to be elementary from the expression of $H_q(x)$; see Proposition \ref{prop:Hq_properties}(b) below.  

The proof of Theorem \ref{thm:A} is complete given the observations about $H_q(x)$ in Proposition \ref{prop:Hq_properties}. Let $t=q^{-1}$ and we define
\begin{align}
H(x;t)&:=H_q(x)=\sum_{k=0}^\infty t^{k^2}x^{2k} \frac{1}{(t;t)_k} (1-t^{k+1}x)(1-t^{k+2}x)\dots \label{eq:Hq_def} \\
& = (tx;t)_\infty \sum_{k=0}^\infty t^{k^2}x^{2k} \frac{1}{(t;t)_k (tx;t)_k.}\;\textup{(if $x\neq t^{-1},t^{-2},\dots$)}.\label{eq:Hq_def2}
\end{align}

We note that the infinite sum \eqref{eq:Hq_def} defines $H(x;t)$ in two possible ways. First, the infinite sum converges formally to a power series in $x$ and $t$ (due to the $x^{2k}$ factor). Second, if $0<t<1$ is fixed, then each summand of \eqref{eq:Hq_def} is an entire function in $x$, so \eqref{eq:Hq_def} is an infinite sum of functions. We will show that the ``formal'' sum and the ``analytic'' sum are the same.

\begin{proposition}\label{prop:Hq_properties}

For any fixed real number $0<t<1$, we have

\begin{enumerate}
\item The infinite sum \eqref{eq:Hq_def} of entire functions in $x$ converges uniformly on any bounded disc to an entire function whose Maclaurin series is the coefficient-wise limit of the sum \eqref{eq:Hq_def} of formal power series in $x$. 

\item $H(x;t)>0$ if $x<t^{-1}$ or $x=t^{-i}, i=1,2,\dots$.

\item $H(1;t)=1$.
\item $H(-1;t)=(-t;t)_\infty (-t;t^2)_\infty$.
\end{enumerate}
\end{proposition}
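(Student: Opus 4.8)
The plan is to handle part (a) as an analytic preliminary, then derive (b), (c), (d) as consequences of the two explicit formulas \eqref{eq:Hq_def} and \eqref{eq:Hq_def2}. For part (a), fix $0<t<1$ and a disc $|x|\le \rho$. The $k$-th summand of \eqref{eq:Hq_def} is $t^{k^2}x^{2k}(t;t)_k^{-1}(t^{k+1}x;t)_\infty$; since $|(t^{k+1}x;t)_\infty|\le \prod_{j\ge 1}(1+t^{k+j}\rho)\le \prod_{j\ge 1}(1+t^j\rho)=:C_\rho$ is bounded uniformly in $k$, and $(t;t)_k^{-1}\le (t;t)_\infty^{-1}$, the summand is bounded in absolute value by $C_\rho (t;t)_\infty^{-1} t^{k^2}\rho^{2k}$, whose sum over $k$ converges. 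Hence the series converges absolutely and uniformly on $|x|\le\rho$ by the Weierstrass $M$-test, so the analytic sum is entire. To see its Maclaurin coefficients agree with the coefficient-wise (formal) limit, note that the coefficient of $x^N$ in the partial sum $\sum_{k\le K}$ stabilizes once $2K>N$ (because each summand starts at $x^{2k}$), so the formal and analytic partial sums have literally the same low-order coefficients; uniform convergence then lets us read off Maclaurin coefficients termwise. This gives (a).

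For part (b), I would read positivity directly off \eqref{eq:Hq_def}: when $x<t^{-1}$, every factor $1-t^{k+j}x$ with $j\ge 1$ satisfies $t^{k+j}x<t^{j}\cdot t^{-1}x<1$ wait—more carefully, $t^{k+j}x\le t\cdot x< t\cdot t^{-1}=1$ when $x\le t^{-1}$ and $x\ge 0$, while for $x<0$ the factor is $>1>0$; so $(t^{k+1}x;t)_\infty>0$, and since $t^{k^2}x^{2k}(t;t)_k^{-1}\ge 0$ with the $k=0$ term equal to $(tx;t)_\infty>0$, the whole sum is $>0$. For $x=t^{-i}$, $i\ge 1$, I would instead use \eqref{eq:Hq_def2}: $(tx;t)_\infty=(t^{1-i};t)_\infty=0$ for $i\ge 1$ unless... actually $(t^{1-i};t)_\infty$ contains the factor $1-t^{1-i}\cdot t^{i-1}=1-1=0$, so that form degenerates; instead plug $x=t^{-i}$ into \eqref{eq:Hq_def} directly, where the factor $(t^{k+1}x;t)_\infty=(t^{k+1-i};t)_\infty$ vanishes precisely when $k<i$ (it contains a factor $1-t^{k+1-i+m}$ equal to $1$ when $k+1-i+m=0$, i.e. it does \emph{not} vanish; let me re-examine). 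The honest statement is: $(t^{k+1-i};t)_\infty=\prod_{m\ge 0}(1-t^{k+1-i+m})$, which is $0$ iff some exponent $k+1-i+m=0$ with the factor being $1-t^0=0$, i.e. iff $i-k-1\ge 0$, i.e. $k\le i-1$. So for $x=t^{-i}$ the sum \eqref{eq:Hq_def} reduces to $\sum_{k\ge i} t^{k^2}t^{-2ik}(t;t)_k^{-1}(t^{k+1-i};t)_\infty$, every term of which is strictly positive (the surviving Pochhammer product has all exponents $\ge 1$), so $H(t^{-i};t)>0$. This is the step I expect to be fiddliest—keeping the vanishing/non-vanishing bookkeeping of the Pochhammer factors straight—so I would write it out carefully.

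For part (c), set $x=1$ in \eqref{eq:Hq_def2}, which is legitimate since $1\ne t^{-1},t^{-2},\dots$; this gives $H(1;t)=(t;t)_\infty\sum_{k\ge 0}t^{k^2}(t;t)_k^{-2}$, and I would invoke the classical Cauchy/Jacobi identity $\sum_{k\ge 0}t^{k^2}z^k(t;t)_k^{-1}(zt;t)_k^{-1}$-type evaluation—more directly, $\sum_{k\ge 0}\frac{t^{k^2}}{(t;t)_k^2}=\frac{1}{(t;t)_\infty}$, one of the standard consequences of the $q$-Gauss sum or of Cauchy's identity (it is the specialization relevant to $\Zhat_{R_q}(1)=(t;t)_\infty^{-2}$ anyway). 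Then $H(1;t)=(t;t)_\infty\cdot(t;t)_\infty^{-1}=1$. I would cite this from \cite{andrewspartitions}. For part (d), set $x=-1$ in \eqref{eq:Hq_def2}: $H(-1;t)=(-t;t)_\infty\sum_{k\ge 0}\frac{t^{k^2}}{(t;t)_k(-t;t)_k}$. The remaining sum I would evaluate by recognizing it as an instance of a Rogers–Fine or $q$-hypergeometric summation—concretely, $\sum_{k\ge 0}\frac{t^{k^2}}{(t;t)_k(-t;t)_k}=(-t;t^2)_\infty/(-t;t)_\infty$ after clearing, wait: the target is $H(-1;t)=(-t;t)_\infty(-t;t^2)_\infty$, so the needed identity is $\sum_{k\ge 0}\frac{t^{k^2}}{(t;t)_k(-t;t)_k}=(-t;t^2)_\infty$. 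Using $(-t;t)_k=(-t;t^2)_{\lceil k/2\rceil}\cdots$—rather than guess, I would derive it from the known evaluation $\sum_{k\ge 0}\frac{q^{k^2}}{(q;q)_k(-q;q)_k}=\sum_{k\ge 0}\frac{q^{k^2}(1-q)}{(q^2;q^2)_k(-q;q)_{k}}\cdots$; cleanest is to note $(t;t)_k(-t;t)_k$ relates to $(t^2;t^2)_k$ via $(t;t)_k(-t;t)_k=(t^2;t^2)_k\cdot(\text{correction})$ and then apply an Euler-type identity giving a single infinite product. The main obstacle across the whole proposition is pinning down these two $q$-series evaluations in (c) and (d) in a form directly citable from \cite{andrewspartitions}; everything else is the $M$-test argument and elementary sign analysis.
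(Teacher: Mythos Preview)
Your approach is correct and essentially identical to the paper's: the $M$-test for (a), termwise positivity in \eqref{eq:Hq_def} for (b), and substitution into \eqref{eq:Hq_def2} followed by classical $q$-series identities for (c) and (d). For the step you flagged as the main obstacle in (d), the paper does exactly what you suspected: use the elementary factorization $(t;t)_k(-t;t)_k=(t^2;t^2)_k$ together with $t^{k^2}=(t^2)^{\binom{k}{2}}t^k$, then apply Euler's identity $\sum_{n\ge 0}\dfrac{t^{\binom{n}{2}}x^n}{(t;t)_n}=(-x;t)_\infty$ (Andrews, (2.2.6)) with $x\mapsto t$, $t\mapsto t^2$ to obtain $\sum_{k\ge 0}\dfrac{t^{k^2}}{(t^2;t^2)_k}=(-t;t^2)_\infty$.
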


\begin{proof}~
\begin{enumerate}
\item Fix a bounded disc $\abs{x}\leq M$. Then as $k$ goes to infinity, the factor $\dfrac{1}{(t;t)_k} (1-t^{k+1}x)(1-t^{k+2}x)\dots$ has a uniform bound for $\abs{x}\leq M$ that only depends on $M$. The uniform convergence of the sum \eqref{eq:Hq_def} follows from the convergence of $\sum t^{k^2}M^k$. Therefore, the sum \eqref{eq:Hq_def} defines an entire function.

To find its Maclaurin series, consider the sequence of partial sums of \eqref{eq:Hq_def}. The assertion of (a) follows from the fact that if a sequence of holomorphic functions $f_k(x)$ converges uniformly to a holomorphic function $f(x)$ on a disc $D$ centered at $x=0$, then the Maclaurin series of $f_k(x)$ must converge to the Maclaurin series of $f(x)$ coefficient-wise.  For a proof, we recall that the $n$-th Maclaurin coefficient of $f(x)$ is given by $n! f^{(n)}(0)$. For any $n$, the sequence $f_k^{(n)}(x)$ converges uniformly to $f^{(n)}(x)$ on compact subsets of $D$ (see for instance \cite[Theorem 10.28]{rudinrealandcomplex}), so we have $n! f_k^{(n)}(0) \to n! f^{(n)}(0)$ as $k\to \infty$. 

\item If $x<t^{-1}$, then every term of \eqref{eq:Hq_def} is positive. If $x=t^{-i}, i=1,2,\dots$, then
\begin{align}
H(t^{-i};t)&=\sum_{k=0}^\infty \frac{t^{k^2}(t^{-i})^{2k}}{(1-t)\dots (1-t^k)} (1-t^{k+1}t^{-i})(1-t^{k+2}t^{-i})\dots\\
&=\sum_{k=i}^\infty \frac{t^{k^2}(t^{-i})^{2k}}{(1-t)\dots (1-t^k)} (1-t^{k+1-i})(1-t^{k+2-i})\dots
\end{align}
and every term in the last sum is positive. 

\item 
Using \eqref{eq:Hq_def2}, we have
\begin{align}
H(1;t)=(t;t)_\infty \sum_{k=0}^\infty t^{k^2} \frac{1}{(t;t)_k (t;t)_k},
\end{align}
which is equal to $1$ by the following standard identity due to Euler; see \cite[p.\ 21, (2.2.9)]{andrewspartitions}.
\begin{equation}\label{eq:euler1}
\sum_{k=0}^\infty \frac{t^{k^2}}{(t;t)_k^2} = \frac{1}{(t;t)_\infty}. 
\end{equation}

\item
To compute $H(-1;t)$, we need the following identities:
\begin{gather}
(t^2;t^2)_n=(t;t)_n(-t;t)_n\label{eq:pochhammersquare}\\
\sum_{n=0}^\infty \frac{t^{{n\choose 2}}x^n}{(t;t)_n} = (-x;t)_\infty,\label{eq:euler2}
\end{gather}
where the first one is elementary:
\begin{align}
(t^2;t^2)_n&=\prod_{i=1}^n (1-t^{2i}),\\
&=\prod_{i=1}^n (1-t^i)(1+t^i) = (t;t)_n (-t;t)_n,
\end{align}
and the second one is due to Euler; see \cite[p.\ 19, (2.2.6)]{andrewspartitions}. Now we have
\begin{align}
H(-1;t)&=(-t;t)_\infty \sum_{k=0}^\infty \frac{t^{k^2}}{(t;t)_k(-t;t)_k}  &\\
&=(-t;t)_\infty \sum_{k=0}^\infty \frac{(t^2)^{{k\choose 2}} t^k}{(t^2;t^2)_k} & \text{(by \eqref{eq:pochhammersquare})}\\
&=(-t;t)_\infty (-t;t^2)_\infty. & \text{(by \eqref{eq:euler2} with $x\mapsto t$, $t\mapsto t^2$)}
\end{align}
\end{enumerate}
\end{proof}

The proof of Theorem \ref{thm:A} now follows directly from Theorem \ref{thm:B} and Proposition \ref{prop:Hq_properties}. Recall the notation $R_q$ and $H_q(x)$ in the statement of Theorem \ref{thm:A}.

\begin{proof}[Proof of Theorem \tu{\ref{thm:A}}]
~
\begin{enumerate}
\item Since $H_q(x)$ is entire, the factorization \eqref{eq:factorize} defines the meromorphic continuation of $\Zhat_{R_q}(x)$ to all of $\C$.
\item We have proved the factorization formula \eqref{eq:factorize} in the computation leading to \eqref{eq:thmAb_proof}. We proved the entireness of $H_q(x)$ in Proposition \ref{prop:Hq_properties}(a). The fact that $x=q^i, i=1,2,\dots$ are double poles of $\Zhat_{R_q}(x)$ follows from Proposition \ref{prop:Hq_properties}(b).
\item The evaluation of $\Zhat_{R_q}(1)$ follows from Proposition \ref{prop:Hq_properties}(c). The evaluation of $\Zhat_{R_q}(-1)$ follows from Proposition \ref{prop:Hq_properties}(d) and the following computation:
\begin{align}
\Zhat_{R_q}(-1) &= (-t;t)_\infty^{-2} H(-1;t)\\
&= \frac{(-t;t^2)_\infty}{(-t;t)_\infty}\\
&= \frac{(1+t)(1+t^3)(1+t^5)\dots}{(1+t)(1+t^2)(1+t^3)\dots}\\
&=\frac{1}{(1+t^2)(1+t^4)(1+t^6)\dots}\\
&=\frac{1}{(-t^2;t^2)_\infty}.
\end{align}
\end{enumerate}
\end{proof}

\section{Further discussion on $H_q(x)$}\label{sec:further}
The factor $H_q(x)$ that appears in the factorization of $\Zhat_{X,p}(x)$ (where $(X,p)$ is a nodal singularity) is mostly mysterious. We note that if Question \ref{q:main}(b) has a positive answer, then we can attach a meaningful holomorphic function $H_{X,p}(x)$ to a curve singularity $(X,p)$. In particular, $H_q(x)$ would be $H_{X,p}(x)$ associated to a nodal singularity $(X,p)$. It is natural to ask about the properties of $H_{X,p}(x)$ for a general singularity $(X,p)$, and what they reveal about the geometry of $X$ at $p$. This section discusses possible analytic properties of $H_q(x)$, aiming at providing clues to the questions above.

The mysterious function $H_q(x)=H(x;t)$ (where $t=q^{-1}\in (0,1/2]$ is fixed) appears to share some analytical features with the \emph{partial theta function} 
$\partialtheta(x;t):=\sum_{n=0}^\infty t^{n^2}x^n$. We summarize several notable properties of the partial theta function; we refer the readers to an excellent survey paper \cite{warnaarpartial} where many references are listed. The partial theta function satisfies the functional equation 
\begin{equation}\label{eq:func_eq_theta}
\partialtheta(x;t)-tx\partialtheta(t^2 x;t)=1.
\end{equation}

An important property of the partial theta function is having \emph{smooth coefficients}. An entire function $f(x)=\sum a_nx^n$ is said to have smooth coefficients if $\lim_{n\to \infty} a_n^2/(a_{n-1} a_{n+1})$ converges. The partial theta function has smooth coefficients because $a_n^2/(a_{n-1} a_{n+1})$ is constant.

Having smooth coefficients is the reason behind many other analytic behaviors of $\partialtheta(x;t)$, such as having roots distributed in an ``almost geometric sequence'', belonging to the Laguerre--P\'{o}lya class, etc.; see \cite{warnaarpartial} for an excellent survey paper on this topic. Therefore, having smooth coefficients is a key feature to look for when comparing $H(x;t)$ to $\partialtheta(x;t)$. 

Based on numerical observations, the roots of $H(x;t)$ appear to be imaginary, and $H(x;t)$ does not appear to have smooth coefficients. However, the even-degree terms and odd-degree terms of $H(x;t)$ appear to have smooth coefficients and real roots. For any power series $f(x)=\sum a_nx^n$, denote
\begin{equation}
\ell_x f(x) := \lim_{n\to \infty} \frac{a_n^2}{a_{n-1} a_{n+1}}.
\end{equation}

Our observations suggest the following conjecture. 

\begin{conjecture}
The function $H(x;t)$ satisfies the following properties:

\begin{enumerate}
\item As a power series in $x$ and $t$, we have
\begin{equation}
H(x;t)=\sum_{n=0}^\infty (-1)^n t^{\ceil{n^2/4}} (1+O(t)) x^n.
\end{equation}
\item Let $F(x;t)$ and $G(x;t)$ be defined such that $H(x;t)=F(x^2;t)+xG(x^2;t)$. Then both $F(x;t)$ and $G(x;t)$ have smooth coefficients. Moreover, both $\ell_x F(x;t)$ and $\ell_x G(x;t)$ are equal to $t^2$.
\end{enumerate}
\end{conjecture}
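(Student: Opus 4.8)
The plan is to derive both parts of the conjecture from one explicit expansion of $H(x;t)$. First I would apply Euler's identity \eqref{eq:euler2}, with the substitution $x\mapsto -xt^{k+1}$, to the factor $(xt^{k+1};t)_\infty$ in \eqref{eq:Hq_def}; this rewrites the definition of $H$ as the double series
\[
H(x;t)=\sum_{k,m\ge 0}\frac{(-1)^m\,t^{\,k^2+\binom{m}{2}+(k+1)m}}{(t;t)_k\,(t;t)_m}\,x^{2k+m}.
\]
For part (a), the coefficient of $x^n$ is the finite sum over $m=n-2k$ with $0\le k\le\floor{n/2}$, and a one-line simplification shows the exponent of $t$ in the $k$-th summand is the integer-valued quadratic $E(n,k)=k^2-(n+1)k+\binom{n+1}{2}$. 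Since $E(n,\cdot)$ is convex with vertex at $k=(n+1)/2$, it is strictly decreasing on $\{0,1,\dots,\floor{n/2}\}$, so it attains its minimum over that range uniquely at $k=\floor{n/2}$, where $E(n,\floor{n/2})=\ceil{n^2/4}$. As $E$ is integer-valued, every other summand carries a strictly higher power of $t$, and the minimal summand is $(-1)^n t^{\ceil{n^2/4}}/\bigl((t;t)_{\floor{n/2}}(t;t)_{n\bmod 2}\bigr)=(-1)^n t^{\ceil{n^2/4}}(1+O(t))$. This is exactly part (a).

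For part (b) I would read the even- and odd-indexed coefficients of $H$ off the same double series. Taking $m=2j$, $k=n-j$ (respectively $m=2j+1$, $k=n-j$) and simplifying the exponent gives
\[
[x^{2n}]H=t^{n^2}\sum_{j=0}^{n}\frac{t^{\,j(j+1)}}{(t;t)_{n-j}\,(t;t)_{2j}},\qquad
[x^{2n+1}]H=-\,t^{\,n^2+n+1}\sum_{j=0}^{n}\frac{t^{\,j(j+2)}}{(t;t)_{n-j}\,(t;t)_{2j+1}},
\]
so that $F(y;t)=\sum_n[x^{2n}]H\,y^n$ and $G(y;t)=\sum_n[x^{2n+1}]H\,y^n$. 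For a fixed real $t\in(0,1)$ the elementary bound $(t;t)_{n-j}\ge(t;t)_\infty$ makes each of these $j$-sums dominated, uniformly in $n$, by a convergent series; dominated convergence then yields the Gaussian asymptotics $[x^{2n}]H\sim C_F(t)\,t^{n^2}$ and $[x^{2n+1}]H\sim -C_G(t)\,t^{\,n^2+n+1}$, where $C_F(t)=(t;t)_\infty^{-1}\sum_{j\ge0}t^{j(j+1)}/(t;t)_{2j}>0$ and $C_G(t)=(t;t)_\infty^{-1}\sum_{j\ge0}t^{j(j+2)}/(t;t)_{2j+1}>0$. Substituting these into $a_n^2/(a_{n-1}a_{n+1})$, the positive constants and the linear-in-$n$ part of the exponent cancel (the common sign of the coefficients of $G$ is irrelevant), and one is left with $\lim_{n\to\infty}a_n^2/(a_{n-1}a_{n+1})=t^{\,2n^2-(n-1)^2-(n+1)^2}=t^{-2}$ for both $F$ and $G$; in particular the limit exists, so $F$ and $G$ have smooth coefficients, with the same value of $\ell_x$ as the partial theta function $\partialtheta(x;t)$.

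The three computational ingredients — the Euler expansion, the minimization of $E(n,k)$, and the uniform domination in part (b) — are all elementary manipulations with $q$-Pochhammer symbols. The one point that genuinely requires care is in part (b): having smooth coefficients means precisely that $a_n^2/(a_{n-1}a_{n+1})$ \emph{converges}, not merely that it stays between two positive constants, so the argument must show that the normalized coefficients $[x^{2n}]H/t^{n^2}$ and $[x^{2n+1}]H/t^{\,n^2+n+1}$ actually converge as $n\to\infty$. This is exactly where the uniform domination by $(t;t)_\infty^{-1}$ enters, and it is the step I would write out in full; the rest is bookkeeping.
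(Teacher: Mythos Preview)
The paper offers \emph{no} proof of this statement: it is explicitly labelled a Conjecture, introduced by the phrase ``Our observations suggest the following conjecture,'' and left open. So there is nothing in the paper to compare your argument against. What you have written is in fact a genuine proof of the conjecture, and it is correct.

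For part (a), your expansion of $(xt^{k+1};t)_\infty$ via Euler's identity, the resulting double series, the computation $E(n,k)=k^2-(n+1)k+\binom{n+1}{2}$, and the minimization over $0\le k\le\lfloor n/2\rfloor$ are all right; one checks directly that $E(n,\lfloor n/2\rfloor)=\lceil n^2/4\rceil$ and that the next value of $E$ is strictly larger (by at least $2$, in fact), so the leading $t$-power of $[x^n]H$ is exactly as claimed.

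For part (b), your formulas for $[x^{2n}]H$ and $[x^{2n+1}]H$ are correct, and the dominated-convergence step is legitimate: the bound $(t;t)_{n-j}\ge (t;t)_\infty$ (together with $(t;t)_{2j}\ge (t;t)_\infty$) gives a summable majorant independent of $n$, so the normalized coefficients converge to positive constants $C_F(t),C_G(t)$, and the ratio $a_n^2/(a_{n-1}a_{n+1})$ tends to a limit. The only point to flag is the \emph{value} of that limit. Your computation gives
\[
\lim_{n\to\infty}\frac{a_n^2}{a_{n-1}a_{n+1}}=t^{\,2n^2-(n-1)^2-(n+1)^2}=t^{-2},
\]
which is correct, and indeed coincides with $\ell_x\partialtheta(x;t)=t^{-2}$ (for $a_n=t^{n^2}$ one has $a_n^2/(a_{n-1}a_{n+1})=t^{-2}$ exactly). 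The paper, however, records both values as $t^2$. Your computation shows that this is a typo in the paper: the intended statement, consistent with the paper's own remark that $\ell_x F=\ell_x G=\ell_x\partialtheta$, is $t^{-2}$ (equivalently $q^2$), and your argument proves it.
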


\begin{question}
Does $F(x;t)$ (or $G(x;t)$) satisfy a functional equation, possibly similar to \eqref{eq:func_eq_theta}, the functional equation for $\partialtheta(x;t)$?
\end{question}

We note $\ell_x \partialtheta(x;t)=t^2$, the same as the conjectured value of $\ell_x F(x;t)$ and $\ell_x G(x;t)$. 

Apart from the similarity to the partial theta function, another motivation why we look for a functional equation for $H(x;t)$ is an observation by Cohen and Lenstra \cite[\S 7]{cohenlenstra1984heuristics}, where they find a functional equation for an entire function built from the Cohen--Lenstra zeta function of a Dedekind domain.

\subsection*{Acknowledgements}
The author thanks Jason Bell, Jim Bryan, Ken Ono and John Stembridge for fruitful conversations. The author thanks Jeffrey Lagarias, Mircea Musta\c{t}\u{a} and Michael Zieve for fruitful conversations and helpful comments on earlier drafts. The author thanks the referee for informing the author about certain related work. The author thanks the support of National Science Foundation grants DMS-1701576 and DMS-1840234.

\bibliography{paper_cohen_lenstra_zeta_v2}

\end{document}